\documentclass[reqno,10pt]{amsart}
\usepackage{amsmath,amssymb,amsthm}
\usepackage{graphicx} 
\usepackage{xcolor}
\usepackage{tikz}

\allowdisplaybreaks

\theoremstyle{plain}
\newtheorem{theorem}{Theorem}[section]
\newtheorem{lemma}[theorem]{Lemma}
\newtheorem{proposition}[theorem]{Proposition}
\newtheorem{corollary}[theorem]{Corollary}
\newtheorem{definition}[theorem]{Definition}

\numberwithin{equation}{section}
\numberwithin{figure}{section}

\begin{document}

\title[Symmetric Jacobi Polynomials on a Triangle: Spectral Algebra]{Symmetric Jacobi Polynomials on a Triangle and Their Spectral Algebra}

\author[M. E. Marriaga and M. A.  Pi\~nar]
{Misael E. Marriaga$^*$ and Miguel A. Pi\~nar}

\address[M. E. Marriaga]{Departamento de Matem\'atica Aplicada, Ciencia e Ingenier\'ia de Materiales y
	Tecnolog\'ia Electr\'onica, Universidad Rey Juan Carlos (Spain)}
\email{misael.marriaga@urjc.es}
\address[M. A.  Pi\~nar]{
	Departamento de Matem\'{a}tica Aplicada, Facultad de Ciencias. Universidad de Granada (Spain)}
\email{mpinar@ugr.es}

\thanks{The first author (MEM) has been supported by the research project PID2021-122154NB-I00, \emph{Ortogonalidad y Aproximación con Aplicaciones en Machine Learning y Teoría de la Probabilidad}, and by the research project PID2024-155133NB-I00, \emph{Ortogonalidad, Aproximación e Integrabilidad: Aplicaciones en Procesos Estocásticos Clásicos y Cuánticos}, both funded by MICIU/AEI/10.13039/501100011033 and by ``ERDF A Way of making Europe''. The second author (MAP) has been supported by the research project PID2023-149117-NB-I00 granted by MICIU/AEI/10.13039/501100011033 and ``ERDF A Way of making Europe''.\\
$^*$\textit{Corresponding author:} Misael E. Marriaga (misael.marriaga@urjc.es)}

\date{\today}

\begin{abstract}
We study a family of symmetric orthogonal polynomials on the unit triangle
associated with the weight
\[
w_{\alpha,\gamma,\kappa}(x,y)
=
(xy)^\alpha(1-x-y)^\gamma |x-y|^{2\kappa+1},
\quad \alpha,\gamma,\kappa>-1,\quad
\alpha+\kappa>-\frac32.
\]
We construct the corresponding monic symmetric orthogonal basis on the simplex chamber
and prove that its elements are eigenfunctions of a formally self-adjoint second-order
differential operator
\(\mathcal D_1^{\alpha,\gamma,\kappa}\). A pair of adjoint ladder operators
yields a second operator of order two
\(\mathcal D_2^{\alpha,\kappa}\) with the same eigenfunctions. We also obtain
an explicit representation of the basis in terms of one-variable Jacobi
polynomials and compute its squared norms. After passing to the elementary
symmetric variables, we determine the full algebra of linear partial
differential operators with real polynomial coefficients having the transformed
polynomials as eigenfunctions. Every such operator can be written uniquely
as a polynomial in
\(\mathcal D_1^{\alpha,\gamma,\kappa}\) and
\(\mathcal D_2^{\alpha,\kappa}\); consequently, this algebra is isomorphic to
the real polynomial ring in two variables.
\end{abstract}

\subjclass[2020]{Primary 33C45; Secondary 42C05, 42C15.}

\keywords{Symmetric orthogonal polynomials; Jacobi polynomials on the simplex; ladder operators; differential operator algebras; spectral analysis.}

\maketitle

\section{Introduction}

Orthogonal polynomials in several variables form a natural meeting point of
approximation theory, special functions, and the theory of differential
operators. In contrast with the one-variable theory, where the classical
families are essentially governed by second-order Sturm--Liouville operators,
the multivariate setting is shaped by the geometry of the orthogonality domain
and by the symmetries of the underlying weight. Standard domains such as the
ball, the simplex, and product regions provide the basic models for the
theory; see, for instance, \cite{DX14}. Among these, the simplex is a
fundamental example: Jacobi-type weights lead to explicit bases in terms of
one-variable Jacobi polynomials and to natural second-order differential
operators.

Related explicit constructions have also been developed for polynomial
systems on regions bounded by lines and a parabola
\cite{Koornwinder1975,SprinkhuizenKuyper76}, and for families of weights in
two variables \cite{Xu11}. The present paper is motivated by the analogous
structural problem for a chamber of the simplex endowed with a Jacobi-type
weight and an interaction factor along the diagonal.

A second guiding theme is the study of polynomial systems that form common
eigenbases for commutative algebras of differential operators. The classification problem for bivariate orthogonal polynomials
satisfying second-order differential equations goes back to the classical
work \cite{KrallSheffer67}, while the construction of families determined by
algebraically independent partial differential operators was developed in
\cite{Ko74_I,Ko74_II}. This point of view has remained influential because it
connects the analytic structure of orthogonality with the algebraic structure
of differential operators.

Reflection symmetries provide another important source of multivariate
orthogonal polynomials. Differential-difference operators associated with
finite reflection groups \cite{Dunkl89} lead to broad classes of
multivariate special functions and orthogonal polynomial systems. Such structures are also related to
Calogero--Sutherland type models, where singular interaction terms are
naturally associated with root systems and reflection symmetries
\cite{BF97}. In this direction, generalized classical orthogonal polynomials
on the ball and on the simplex for reflection-invariant weights were studied
in \cite{Xu01}, where they are related to Dunkl-type operators and to
Calogero--Sutherland systems. Earlier work on orthogonal polynomials with
finite symmetry, including symmetry of order three and octahedral symmetry,
also illustrates how imposing symmetry on a multivariate orthogonality
problem can reveal additional differential operators and bases adapted to the
group action \cite{Dunkl84a,Dunkl84b}. For symmetric polynomial systems,
elementary symmetric functions provide a natural set of variables, as in the
construction of generalized classical families in \cite{BP23}.

The purpose of this paper is to study the Jacobi-type structure on the unit
triangle for a symmetric weight with an interaction factor along the diagonal.
More precisely, on
\[
\mathbf T^2=\{(x,y)\in\mathbb R^2:\ x\ge0,\ y\ge0,\ x+y\le1\},
\]
we consider
\[
w_{\alpha,\gamma,\kappa}(x,y)
=
(xy)^\alpha(1-x-y)^\gamma |x-y|^{2\kappa+1},
\quad
\alpha,\gamma,\kappa>-1, \quad \alpha+\kappa>-\frac32.
\]
The factor \((xy)^\alpha(1-x-y)^\gamma\) is of Jacobi type, while
\(|x-y|^{2\kappa+1}\) introduces an interaction factor along the
reflection wall \(x=y\). Since both the weight and the polynomials under
consideration are symmetric under the interchange of \(x\) and \(y\), the
orthogonality can be realized on the chamber
\[
\triangle=\{(x,y)\in\mathbf T^2:\ y<x\}.
\]
The problem is therefore to construct the underlying orthogonal basis
explicitly, determine its norms, and identify the differential operators for
which it is a system of eigenfunctions.

Our first contribution is the construction of a monic symmetric orthogonal
basis adapted to the elementary symmetric functions. We then prove that these
polynomials are eigenfunctions of a formally self-adjoint second-order
differential operator \(\mathcal D_1^{\alpha,\gamma,\kappa}\). Its formal
self-adjointness is established directly on the chamber \(\triangle\), taking
into account the boundary behavior of the weight on the three relevant
boundary components.

We also introduce a pair of adjoint operators \(D_-\) and
\(D_+^{\alpha,\kappa}\). These operators relate the orthogonal systems
associated with the weights \(w_{\alpha,\gamma,\kappa}\) and
\(w_{\alpha+1,\gamma,\kappa+1}\), and map neighboring elements between the
corresponding ordered families. In this sense they provide ladder relations
for the orthogonal basis. Their composition
\[
\mathcal D_2^{\alpha,\kappa}=D_+^{\alpha,\kappa}D_-
\]
gives a second differential operator of order two with the same
eigenfunctions. This follows the operator-theoretic philosophy of recent work
on ladder operators for bivariate generalized classical symmetric orthogonal
polynomials \cite{AMP25} and is also in line with the ladder-operator approach
developed for generalized Zernike or disk polynomials
\cite{Marriaga2025}.

A further result is an explicit Jacobi representation of the basis. We show
that the polynomials can be written as products of one-variable Jacobi
polynomials and prove directly that the resulting family is mutually
orthogonal, with closed formulas for its squared norms. These formulas
determine the normalization of the explicit basis.

Finally, after passing to the variables
\[
u=x+y,\qquad v=xy,
\]
we study the algebra of differential operators for which the transformed
polynomials form an eigenbasis. This change of variables rewrites symmetric
polynomials as ordinary bivariate polynomials on a planar domain bounded by
a parabola, a line, and an axis.
We prove that this
algebra is generated by the two second-order operators
\(\mathcal D_1^{\alpha,\gamma,\kappa}\) and
\(\mathcal D_2^{\alpha,\kappa}\).

The paper is organized as follows. Section~\ref{sec:classical-simplex}
recalls the classical Jacobi orthogonal polynomials on the simplex and fixes
the notation used throughout the paper. In Section~\ref{sec:symmetric-polys}
we introduce the symmetric weight, the chamber $\triangle$, the graded order
on symmetric monomials, and the corresponding monic orthogonal basis.
Section~\ref{sec:D1} defines the operator
$\mathcal D_{1}^{\alpha,\gamma,\kappa}$, proves its formal self-adjointness, and
shows that the symmetric orthogonal polynomials are its eigenfunctions.
Section~\ref{sec:ladder} is devoted to the operators $D_-$ and
$D_+^{\alpha,\kappa}$, their adjointness relation, their action on the
orthogonal basis, and the resulting second-order operator
$\mathcal D_{2}^{\alpha,\kappa}$. In
Section~\ref{sec:explicit-jacobi} we give an explicit representation of the
basis in terms of one-variable Jacobi polynomials and compute the squared
norms. Section~\ref{sec:uv} studies the change of variables
$u=x+y$, $v=xy$ and rewrites the main operators in these coordinates.
Finally, Section~\ref{sec:algebra} describes the algebra of differential
operators for which the transformed polynomials form an eigenbasis.

%------------------------------------------------------------------------------
\section{Classical orthogonal polynomials on the simplex}\label{sec:classical-simplex}
%------------------------------------------------------------------------------

The classical Jacobi system on the simplex will serve as the model for the
symmetric construction developed below. In particular, the explicit basis and 
the norm constants recalled in this section
are the features that we shall reproduce for the weighted chamber considered
in the rest of the paper.

Let
\[
\mathbf{T}^2=\{(x,y)\in\mathbb{R}^2:\ x\ge 0,\ y\ge 0,\ x+y\le 1\}
\]
denote the unit triangle in $\mathbb{R}^2$. For $\alpha,\beta,\gamma>-1$, consider the weight function
\[
W_{\alpha,\beta,\gamma}(x,y)=x^{\alpha}y^{\beta}(1-x-y)^{\gamma}, \qquad (x,y)\in \mathbf{T}^2,
\]
and the corresponding bilinear form
\[
\langle P,Q\rangle_{\alpha,\beta,\gamma}
=
b_{\alpha,\beta,\gamma}
\int_{\mathbf{T}^2}
P(x,y)\,Q(x,y)\,W_{\alpha,\beta,\gamma}(x,y)\,dx\,dy,
\]
where
\[
b_{\alpha,\beta,\gamma}
=
\left(
\int_{\mathbf{T}^2}
W_{\alpha,\beta,\gamma}(x,y)\,dx\,dy
\right)^{-1}
=
\frac{\Gamma(\alpha+\beta+\gamma+3)}
{\Gamma(\alpha+1)\Gamma(\beta+1)\Gamma(\gamma+1)}.
\]

A standard orthogonal basis on the triangle is obtained by coupling Jacobi polynomials in a triangular coordinate system. We therefore begin by recalling the one-variable family that underlies the construction. The Jacobi polynomial of degree $n$ is orthogonal on $[-1,1]$ with respect to the weight
\[
w_{\alpha,\beta}(t)=(1-t)^{\alpha}(1+t)^{\beta}, \qquad \alpha,\beta>-1.
\]
It is typically defined as \cite[(4.1.1)]{Szego75}
\[
P_n^{(\alpha,\beta)}(t)
=
\frac{1}{n!}
\sum_{k=0}^n
\binom{n}{k}
(k+\alpha+1)_{n-k}
(n+\alpha+\beta+1)_k
\left(\frac{t-1}{2}\right)^k,
\]
where, as usual,
\[
(a)_0=1, \qquad (a)_k=a(a+1)\cdots(a+k-1), \quad k\ge 1,
\]
denotes the Pochhammer symbol. These polynomials satisfy the normalization
\[
P_n^{(\alpha,\beta)}(1)=\frac{(\alpha+1)_n}{n!}.
\]

The following result gives one of the classical orthogonal bases on the triangle.

\begin{proposition}(\cite[p.~35]{DX14})
For $n\ge 0$, define
\[
P_{n,m}^{(\alpha,\beta,\gamma)}(x,y)
=
P_{n-m}^{(\beta_m,\alpha)}(2x-1)\,(1-x)^m\,
P_m^{(\gamma,\beta)}\!\left(\frac{2y}{1-x}-1\right),
\qquad 0\le m\le n,
\]
where
\[
\beta_m=\beta+\gamma+2m+1.
\]
Then
\[
\left\{
P_{n,m}^{(\alpha,\beta,\gamma)}(x,y):\ n\ge 0,\ 0\le m\le n
\right\}
\]
is a mutually orthogonal polynomial system with respect to $\langle\cdot,\cdot\rangle_{\alpha,\beta,\gamma}$. Moreover,
\[
\left\langle
P_{n,m}^{(\alpha,\beta,\gamma)},
P_{k,j}^{(\alpha,\beta,\gamma)}
\right\rangle_{\alpha,\beta,\gamma}
=
H_{n,m}^{(\alpha,\beta,\gamma)}\,
\delta_{n,k}\delta_{m,j},
\]
where
\begin{align*}
H_{n,m}^{(\alpha,\beta,\gamma)}
&=
\frac{
(\alpha+1)_{n-m}\,
(\beta+1)_m\,
(\gamma+1)_m\,
(\beta+\gamma+2)_{n+m}
}{
(n-m)!\,m!\,(\beta+\gamma+2)_m\,(\alpha+\beta+\gamma+3)_{n+m}
}\\
&\quad \times
\frac{
(n+m+\alpha+\beta+\gamma+2)\,(m+\beta+\gamma+1)
}{
(2n+\alpha+\beta+\gamma+2)\,(2m+\beta+\gamma+1)
}.
\end{align*}
For compactness, this formula is understood after cancelling the common
factors at exceptional parameter values.
\end{proposition}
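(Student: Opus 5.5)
The plan is to prove orthogonality by the standard triangular (Duffy-type) change of variables, which factors the integral over $\mathbf T^2$ into a product of two one-variable Jacobi integrals. First I will check that $P_{n,m}^{(\alpha,\beta,\gamma)}$ is a polynomial of total degree $n$. Since $P_m^{(\gamma,\beta)}$ has degree $m$, each term $(1-x)^m\bigl(\tfrac{2y}{1-x}-1\bigr)^k$ with $k\le m$ equals $(1-x)^{m-k}(2y-1+x)^k$. Hence $(1-x)^mP_m^{(\gamma,\beta)}\bigl(\tfrac{2y}{1-x}-1\bigr)$ is a polynomial of degree $m$ in $(x,y)$, and multiplying by $P_{n-m}^{(\beta_m,\alpha)}(2x-1)$ gives degree $n$.

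Next I substitute $y=(1-x)t$ with $x,t\in[0,1]$, so that $dy=(1-x)\,dt$. Then $1-x-y=(1-x)(1-t)$, and the weight becomes
\[
W_{\alpha,\beta,\gamma}\,dx\,dy = x^{\alpha}(1-x)^{\beta+\gamma+1}\,t^{\beta}(1-t)^{\gamma}\,dx\,dt .
\]
The product $P_{n,m}P_{k,j}$ becomes
\[
P_{n-m}^{(\beta_m,\alpha)}(2x-1)\,P_{k-j}^{(\beta_j,\alpha)}(2x-1)\,(1-x)^{m+j}\,P_m^{(\gamma,\beta)}(2t-1)\,P_j^{(\gamma,\beta)}(2t-1).
\]
The integral therefore factors into a $t$-integral times an $x$-integral. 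The $t$-integral is the one-variable Jacobi orthogonality for $(1-s)^{\gamma}(1+s)^{\beta}$ under $s=2t-1$, so it vanishes unless $m=j$. When $m=j$, the $x$-weight is $x^{\alpha}(1-x)^{\beta+\gamma+2m+1}=x^{\alpha}(1-x)^{\beta_m}$. Under $s=2x-1$ this is the weight for $P^{(\beta_m,\alpha)}$, so the $x$-integral vanishes unless $n-m=k-m$, that is $n=k$. This proves mutual orthogonality.

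For the norm I will use the classical value
\[
h_N^{(a,b)}=\int_{-1}^1 \bigl(P_N^{(a,b)}\bigr)^2(1-s)^a(1+s)^b\,ds=\frac{2^{a+b+1}\,\Gamma(N+a+1)\Gamma(N+b+1)}{(2N+a+b+1)\,N!\,\Gamma(N+a+b+1)}.
\]
Passing from $[-1,1]$ to $[0,1]$ divides this by $2^{a+b+1}$. This gives
\[
H_{n,m}=b_{\alpha,\beta,\gamma}\,2^{-(\beta_m+\alpha+1)}h_{n-m}^{(\beta_m,\alpha)}\;2^{-(\gamma+\beta+1)}h_m^{(\gamma,\beta)}.
\]
It remains to rewrite the resulting Gamma quotients, together with $b_{\alpha,\beta,\gamma}=\Gamma(\alpha+\beta+\gamma+3)/(\Gamma(\alpha+1)\Gamma(\beta+1)\Gamma(\gamma+1))$, in Pochhammer form. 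I will use $\Gamma(a+N)=\Gamma(a)(a)_N$ and the identities $\beta_m+\alpha+1+(n-m)=n+m+\alpha+\beta+\gamma+2$ and $2(n-m)+\beta_m+\alpha+1=2n+\alpha+\beta+\gamma+2$.

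I expect the main obstacle to be only this bookkeeping. The ratio $\Gamma(n+m+\beta+\gamma+2)/\Gamma(n+m+\alpha+\beta+\gamma+2)$ must be converted into $(\beta+\gamma+2)_{n+m}/(\alpha+\beta+\gamma+3)_{n+m}$, at the cost of the extra factor $(n+m+\alpha+\beta+\gamma+2)$. Similarly, $\Gamma(m+\beta+\gamma+1)$ combines with $\Gamma(2m+\beta+\gamma+2)$ to give $(m+\beta+\gamma+1)/(\beta+\gamma+2)_m$. Where some $N+a+b+1$ vanishes (e.g.\ $N=0$, $a+b=-1$), the formula for $h_N^{(a,b)}$ is read after cancellation, as the statement stipulates.
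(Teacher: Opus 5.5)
Your proof is correct and is the standard one: the paper gives no proof here and simply cites Dunkl--Xu, and your substitution $y=(1-x)t$, the factorisation into two one-variable Jacobi integrals, and Szeg\H{o}'s norm formula are the usual route to that result, as well as the method the paper itself uses for Theorem~\ref{th:S_nk_orthogonal}. One small slip in the bookkeeping: once $\Gamma(n+m+\beta+\gamma+2)$ has been absorbed into $(\beta+\gamma+2)_{n+m}$, it is the leftover $\Gamma(\beta+\gamma+2)$, not $\Gamma(2m+\beta+\gamma+2)$, that combines with $\Gamma(m+\beta+\gamma+1)$ to give $(m+\beta+\gamma+1)/(\beta+\gamma+2)_m$; with that correction your product reproduces $H_{n,m}^{(\alpha,\beta,\gamma)}$ exactly.
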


%------------------------------------------------------------------------------
\section{Symmetric orthogonal polynomials}\label{sec:symmetric-polys}
%------------------------------------------------------------------------------

We now turn to the symmetric orthogonal families that are the main object of this paper. Here, symmetry refers specifically to invariance under the interchange of the variables $x$ and $y$. This requirement will be imposed both on the polynomials and on the underlying weight. Consequently, starting from the classical weight on the triangle, we are naturally led to the specialization in which the exponents of $x$ and $y$ coincide. For this reason, we consider the symmetric specialization $\alpha=\beta$ and define
\[
w_{\alpha,\gamma,\kappa}(x,y)
=
(xy)^{\alpha}(1-x-y)^{\gamma}|x-y|^{2\kappa+1},
\quad \alpha,\gamma,\kappa>-1,\quad \alpha+\kappa>-\frac32,
\]
on the simplex $\mathbf{T}^2$. Thus, the parameter $\kappa$ controls the interaction term along the diagonal $x=y$, while $\alpha$ and $\gamma$ govern the classical Jacobi-type behavior at the boundary of the simplex.

Since the weight satisfies
\[
w_{\alpha,\gamma,\kappa}(y,x)=w_{\alpha,\gamma,\kappa}(x,y),
\]
the orthogonality can be realized on the set
\[
\triangle=\{(x,y)\in \mathbf{T}^2:\ y<x\}.
\]

Let $\mathbb{N}_0^2$ denote the set of pairs of nonnegative integers. We write
\[
(m,l)\prec(n,k)
\quad\Longleftrightarrow\quad
\bigl(m+2l<n+2k\bigr)\ \text{or}\ 
\bigl(m+2l=n+2k\ \text{and}\ l<k\bigr),
\]
and use $\preceq$ for the associated non-strict ordering. Thus, the
ordering is graded by the weighted sum of the indices, where the second
index has weight two, and, within each fixed weighted sum, by the second
index.

By the fundamental theorem of symmetric polynomials, every symmetric
polynomial $p(x,y)$ with real coefficients can be written uniquely as a
polynomial in the elementary symmetric functions
\[
x+y,\qquad xy.
\]
Consequently, every symmetric polynomial $p(x,y)$ can be written uniquely
as a finite linear combination of the elementary-symmetric monomials
\[
\{(x+y)^n(xy)^k:\ n,k\in\mathbb{N}_0\}.
\]
If, in this expansion, $(n,k)$ is the largest pair, with respect to
$\preceq$, appearing with nonzero coefficient, then we call $(n,k)$ the
\emph{symmetric degree} of $p(x,y)$. The ordinary total degree associated
with the monomial $(x+y)^n(xy)^k$ is $n+2k$.

A nonzero symmetric polynomial $p(x,y)$ of symmetric degree $(n,k)$ is said to be \emph{orthogonal} with respect to $w_{\alpha,\gamma,\kappa}$ if
\[
\int_{\triangle}
p(x,y)\,q(x,y)\,w_{\alpha,\gamma,\kappa}(x,y)\,dx\,dy=0
\]
for every symmetric polynomial $q(x,y)$ whose symmetric degree $(m,l)$ satisfies $(m,l)\prec(n,k)$. 

\begin{figure}[t]
	\centering
	\begin{tikzpicture}[scale=4]
		
		% --- Fill the region \triangle = {(x,y) in T^2 : y < x} ---
		\fill[blue!30, fill opacity=0.35] (0,0) -- (1,0) -- (0.5,0.5) -- cycle;
		
		% --- Boundary of the simplex T^2 ---
		\draw[line width=1.2pt,blue!65] (0,0) -- (1,0) -- (0,1) -- cycle;
		
		% --- The separating line y = x inside the simplex ---
		\draw[line width=1.2pt,blue!65] (0,0) -- (0.5,0.5);
		
		% --- Axes (solid black) ---
		\draw[->,black,line width=0.6pt] (-0.08,0) -- (1.18,0) node[below right] {$x$};
		\draw[->,black,line width=0.6pt] (0,-0.08) -- (0,1.18) node[above left] {$y$};
		
		% --- Tick marks and labels ---
		\foreach \x in {0.5,1} {
			\draw[black,line width=0.5pt] (\x,0.015) -- (\x,-0.015);
		}
		\foreach \y in {0.5,1} {
			\draw[black,line width=0.5pt] (0.015,\y) -- (-0.015,\y);
		}
		
		\node[below=4pt] at (0.5,0) {\small $\frac12$};
		\node[below=4pt] at (1,0) {\small $1$};
		\node[left=4pt] at (0,0.5) {\small $\frac12$};
		\node[left=4pt] at (0,1) {\small $1$};
		
	\end{tikzpicture}
	\caption{The region $\triangle=\{(x,y)\in \mathbf{T}^2:\ y< x\}$.}
	\label{fig:region-simplex-y<x}
\end{figure}

Arrange the elementary-symmetric monomials according to the order
$\preceq$. The first few monomials are shown below:
\[
\begin{array}{c|l}
n+2k & \text{basis elements}\\
\hline
0 & 1\\[2pt]
1 & x+y\\[2pt]
2 & (x+y)^2,\quad xy\\[2pt]
3 & (x+y)^3,\quad (x+y)xy\\[2pt]
4 & (x+y)^4,\quad (x+y)^2xy,\quad (xy)^2\\[2pt]
5 & (x+y)^5,\quad (x+y)^3xy,\quad (x+y)(xy)^2 .
\end{array}
\]
Since \(\alpha,\gamma,\kappa>-1\) and \(\alpha+\kappa>-\frac32\),
the weight $w_{\alpha,\gamma,\kappa}$ is integrable on $\triangle$ and
strictly positive in its interior. Hence the corresponding weighted
inner product is finite and positive definite on symmetric polynomials.
Applying the Gram--Schmidt procedure to the ordered
elementary-symmetric monomials with respect to the weight
\(w_{\alpha,\gamma,\kappa}(x,y)\) yields an orthogonal basis of the
space of symmetric polynomials,
\[
\bigl\{p^{\alpha,\gamma,\kappa}_{n,k}(x,y):
n,k\in\mathbb{N}_0\bigr\},
\]
where \(p^{\alpha,\gamma,\kappa}_{n,k}\) has symmetric degree \((n,k)\)
and leading term \((x+y)^n(xy)^k\).

This ordered orthogonal system provides the polynomial framework on which the
spectral analysis of the weighted chamber will be developed in the following
sections.

%------------------------------------------------------------------------------
\section{The differential operator $\mathcal{D}_{1}^{\alpha,\gamma,\kappa}$}\label{sec:D1}
%------------------------------------------------------------------------------

We now introduce the first second-order differential operator associated with
the symmetric orthogonal basis. This operator provides the main spectral
structure for the family considered in this paper.

We will show that, in general, the symmetric orthogonal polynomials
$p_{n,k}^{\alpha,\gamma,\kappa}(x,y)$ are eigenfunctions of a second-order
differential operator with rational coefficients, denoted by
$\mathcal{D}_{1}^{\alpha,\gamma,\kappa}$. To define it, we set
\[
z=1-x-y,
\qquad
\partial_z:=\partial_y-\partial_x,
\]
where $\partial_x:=\frac{\partial}{\partial x}$ and
$\partial_y:=\frac{\partial}{\partial y}$.
Then
\begin{equation*}%\label{D1_agk_1}
\mathcal{D}_{1}^{\alpha,\gamma,\kappa}
:=
\frac{1}{w_{\alpha,\gamma,\kappa}}
\Bigl[
\partial_x\bigl(xz\,w_{\alpha,\gamma,\kappa}\,\partial_x\bigr)
 +\partial_y\bigl(yz\,w_{\alpha,\gamma,\kappa}\,\partial_y\bigr)
 +\partial_z\bigl(xy\,w_{\alpha,\gamma,\kappa}\,\partial_z\bigr)
\Bigr].
\end{equation*}
 Expanding this expression, we
obtain the alternative representation
\begin{equation}\label{D1_agk_2}
\resizebox{0.9\textwidth}{!}{$
\begin{aligned}
\mathcal{D}_{1}^{\alpha,\gamma,\kappa}
&=
x(1-x)\partial_x^2
-2xy\,\partial_x\partial_y
+y(1-y)\partial_y^2
\\
&\quad
+
\left[\alpha+1-(2\alpha+\gamma+2\kappa+4)x\right]\partial_x
+
\left[\alpha+1-(2\alpha+\gamma+2\kappa+4)y\right]\partial_y
\\
&\quad
+
(2\kappa+1)\frac{x\partial_x-y\partial_y}{x-y}.
\end{aligned}
$}
\end{equation}
The last term has an apparent singularity on the diagonal $x=y$. However,
when the operator acts on a symmetric polynomial $p(x,y)$, the numerator is divisible by $x-y$, and therefore
$\mathcal{D}_{1}^{\alpha,\gamma,\kappa}p$ is again a polynomial.

\begin{proposition}\label{prop:D1_agk_selfadjoint}
Let $f(x,y)$ and $g(x,y)$ be symmetric polynomials. Then
$\mathcal{D}_{1}^{\alpha,\gamma,\kappa}$ is formally self-adjoint with respect to
$w_{\alpha,\gamma,\kappa}$ on $\triangle$, that is,
\[
\resizebox{\textwidth}{!}{$\displaystyle\int_{\triangle}
\mathcal{D}_{1}^{\alpha,\gamma,\kappa}[f(x,y)]\,g(x,y)\,
w_{\alpha,\gamma,\kappa}(x,y)\,dx\,dy
=
\int_{\triangle}
f(x,y)\,\mathcal{D}_{1}^{\alpha,\gamma,\kappa}[g(x,y)]\,
w_{\alpha,\gamma,\kappa}(x,y)\,dx\,dy .$}
\]
\end{proposition}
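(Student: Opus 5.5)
The proof uses the divergence form of $\mathcal{D}_{1}^{\alpha,\gamma,\kappa}$ and integration by parts on $\triangle$. Write $w=w_{\alpha,\gamma,\kappa}$. The operator is a sum of three terms of the form $w^{-1}\partial_e(a_e\,w\,\partial_e)$. Here $e$ runs over the directions $x$, $y$ and $z$, with $\partial_z=\partial_y-\partial_x$, and the coefficients are $a_x=xz$, $a_y=yz$, $a_z=xy$. For each term,
\[
\partial_e(a_e w\,\partial_e f)\,g-f\,\partial_e(a_e w\,\partial_e g)
=\partial_e\bigl(a_e w\,(g\,\partial_e f-f\,\partial_e g)\bigr).
\]
So the difference of the two integrals in the statement equals $\int_\triangle \operatorname{div}\mathbf F\,dx\,dy$ for the vector field
\[
\mathbf F=w\bigl(xz\,(g f_x-fg_x)-xy\,(g f_z-fg_z),\; yz\,(gf_y-fg_y)+xy\,(gf_z-fg_z)\bigr).
\]
By the divergence theorem, which I would apply first on a slightly shrunken chamber $\triangle_\varepsilon$ and then pass to the limit, it suffices to show that the flux of $\mathbf F$ through $\partial\triangle$ vanishes.

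The boundary of $\triangle$ has three pieces, and I treat each separately.

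\textbf{The edge $y=0$.} The outward normal is $(0,-1)$, so the normal component is $-F_2$. Every term of $F_2$ carries a factor $y$ multiplying $w$, hence is $O(y^{\alpha+1})$. Since $\alpha>-1$, this tends to $0$.

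\textbf{The hypotenuse $x+y=1$.} The normal is proportional to $(1,1)$. The normal component is
\[
w\bigl(xz(gf_x-fg_x)+yz(gf_y-fg_y)\bigr),
\]
because the $\partial_z$ contributions cancel in $F_1+F_2$. This carries the factor $z\,w=O(z^{\gamma+1})$, which vanishes since $\gamma>-1$.

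\textbf{The diagonal $y=x$.} The normal is proportional to $(-1,1)$. The normal component is
\[
F_2-F_1=w\bigl(yz(gf_y-fg_y)-xz(gf_x-fg_x)+2xy(gf_z-fg_z)\bigr).
\]
On this piece there is no vanishing coefficient, so symmetry must be used. For symmetric $f$ we have $f_x(x,y)=f_y(y,x)$. On $x=y$ this gives $f_x=f_y$, so $f_z=0$ there. Consequently the bracket is $O(|x-y|)$ near the diagonal. Together with $|x-y|^{2\kappa+1}$, the flux density is $O(|x-y|^{2\kappa+2})$, which tends to $0$ because $\kappa>-1$.

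The main obstacle is making the limiting argument rigorous at the corners, where the singular factors of $w$ meet. The delicate corner is the origin, where $y=0$ meets the diagonal and $w$ behaves like $(xy)^\alpha|x-y|^{2\kappa+1}$. This is where the hypothesis $\alpha+\kappa>-\tfrac32$ should enter.

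To handle it, I would take $\triangle_\varepsilon=\{y>\varepsilon,\ x-y>\varepsilon,\ z>\varepsilon\}$ and bound each boundary flux integral uniformly. On the edge $x-y=\varepsilon$, the density is bounded by a constant times $(xy)^\alpha\varepsilon^{2\kappa+2}$. Integrating in $y$ near $0$ gives roughly
\[
\varepsilon^{2\kappa+2}\int_\varepsilon y^{2\alpha}\,dy.
\]
This is $O(\varepsilon^{2\kappa+2})$ when $2\alpha>-1$. Otherwise it is $O(\varepsilon^{2\alpha+2\kappa+3})$, and this tends to $0$ precisely because $\alpha+\kappa>-\tfrac32$.

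The edges $y=\varepsilon$ and $z=\varepsilon$ are handled similarly near the corners. On $y=\varepsilon$, the integral $\int |x-y|^{2\kappa+1}x^\alpha\,dx$ near $x=\varepsilon$ gives a factor $\varepsilon^{\alpha+1}\cdot\varepsilon^{\alpha}\cdot O(\varepsilon^{\min(0,\,2\kappa+2)})$, which again tends to $0$ by the same hypotheses. The other corners involve only two singular factors and are routine.

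Finally, the interior integrals converge by dominated convergence. The integrands are polynomials times $w$, apart from the term $(x\partial_x-y\partial_y)/(x-y)$, which is a polynomial on symmetric polynomials. Since $w$ is integrable on $\triangle$ by the paper's earlier remark, the limit $\varepsilon\to0$ gives the claimed identity.
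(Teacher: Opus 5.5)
Your route matches the paper's: divergence form, Green's theorem on $\triangle_\varepsilon$, and separate flux estimates on the three sides, with $\alpha+\kappa>-\tfrac32$ entering at the origin. The only difference is that you integrate the antisymmetric combination $g\,\mathcal D_1^{\alpha,\gamma,\kappa}f-f\,\mathcal D_1^{\alpha,\gamma,\kappa}g$ (Green's second identity). The paper instead uses the first identity and notes that the resulting Dirichlet form is symmetric. Your field is exactly $\mathbf F=w\,(gA_f-fA_g,\ gB_f-fB_g)$ in the paper's notation, so the boundary analysis is the same. Your treatment of the sides $z=\varepsilon$ and $x-y=\varepsilon$, including the antisymmetry argument that extracts the factor $x-y$, is correct.

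\textbf{The estimate on the edge $y=\varepsilon$ near the origin fails as written.} Since $\kappa>-1$, you have $\min(0,2\kappa+2)=0$. So your bound $\varepsilon^{\alpha+1}\cdot\varepsilon^{\alpha}\cdot O(\varepsilon^{\min(0,2\kappa+2)})$ is just $O(\varepsilon^{2\alpha+1})$. This does not tend to $0$ when $-1<\alpha\le-\tfrac12$; for $\alpha=-0.9$, $\kappa=0$ it blows up like $\varepsilon^{-0.8}$.

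The problem is that freezing $x^\alpha$ at $\varepsilon^\alpha$ is too crude; you must keep the $x$-dependence. On $y=\varepsilon$ the side of $\triangle_\varepsilon$ is $2\varepsilon<x<1-2\varepsilon$. For $x\ge 2\varepsilon$ you have $x/2\le x-\varepsilon\le x$, so $(x-\varepsilon)^{2\kappa+1}\le Cx^{2\kappa+1}$ whatever the sign of $2\kappa+1$. Near the origin the flux is therefore bounded by
\[
C\varepsilon^{\alpha+1}\int_{2\varepsilon}^{\delta}x^{\alpha+2\kappa+1}\,dx .
\]
This is $O(\varepsilon^{\alpha+1})$, $O(\varepsilon^{\alpha+1}\log(1/\varepsilon))$, or $O(\varepsilon^{2\alpha+2\kappa+3})$ according as $\alpha+2\kappa+2$ is positive, zero, or negative. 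All three tend to $0$ because $\alpha>-1$ and $\alpha+\kappa>-\tfrac32$. This is precisely the estimate on $[2\varepsilon,\delta]$ in Lemma~\ref{lem:D1-boundary-estimates}, and it is where the hypothesis is genuinely needed on this side.

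There is also a minor slip on the diagonal edge. In the borderline case $2\alpha=-1$ you get an extra harmless factor $\log(1/\varepsilon)$ rather than $O(\varepsilon^{2\alpha+2\kappa+3})$; it still tends to $0$. With the $y=\varepsilon$ estimate replaced as above, your proof is complete.
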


\begin{proof}
Write
\[
z=1-x-y,
\qquad
w=w_{\alpha,\gamma,\kappa}(x,y).
\]
Since $\partial_z=\partial_y-\partial_x$,
$\mathcal{D}_{1}^{\alpha,\gamma,\kappa}$ can be rewritten in the ordinary
divergence form
\begin{equation}\label{D1_agk_divergence_form}
\mathcal{D}_{1}^{\alpha,\gamma,\kappa} f
=
\frac{1}{w}
\left[
\partial_x(wA_f)+\partial_y(wB_f)
\right],
\end{equation}
where
\[
A_f=x(1-x)\partial_x f-xy\,\partial_y f,
\qquad
B_f=y(1-y)\partial_y f-xy\,\partial_x f.
\]

For $\varepsilon>0$, set
\[
\triangle_\varepsilon
=
\{(x,y)\in\triangle:\ y>\varepsilon,\ z>\varepsilon,\ x-y>\varepsilon\}.
\]
On $\triangle_\varepsilon$ all coefficients of
$\mathcal{D}_{1}^{\alpha,\gamma,\kappa} f$ are regular. Multiplying
\eqref{D1_agk_divergence_form} by $g w$ and using Green's theorem gives
\begin{equation}\label{D1_agk_green}
\resizebox{0.90\textwidth}{!}{$
\begin{aligned}
&\int_{\triangle_\varepsilon}
\mathcal{D}_{1}^{\alpha,\gamma,\kappa}[f]\,g\,w\,dx\,dy
=
\int_{\partial\triangle_\varepsilon}
gwA_f\,dy-gwB_f\,dx
\\
&\quad
-
\int_{\triangle_\varepsilon}
w\left[
x(1-x)\partial_x f\,\partial_x g
-xy\,\partial_y f\,\partial_x g
-xy\,\partial_x f\,\partial_y g
+y(1-y)\partial_y f\,\partial_y g
\right]dx\,dy .
\end{aligned}
$}
\end{equation}

Lemma~\ref{lem:D1-boundary-estimates} shows that the boundary term in
\eqref{D1_agk_green} tends to zero as $\varepsilon\to0^+$. The integrand
in the area integral is also integrable under the standing parameter
assumptions. Letting $\varepsilon\to0^+$, we obtain
\begin{align*}
\int_{\triangle}
\mathcal{D}_{1}^{\alpha,\gamma,\kappa}[f]\,g\,w\,dx\,dy
&=
-
\int_{\triangle}
w\Bigl[
x(1-x)\partial_x f\,\partial_x g
-xy\,\partial_y f\,\partial_x g
\\
&\qquad\qquad
-xy\,\partial_x f\,\partial_y g
+y(1-y)\partial_y f\,\partial_y g
\Bigr]dx\,dy .
\end{align*}
The expression on the right-hand side is symmetric in $f$ and $g$.
Therefore,
\[
\int_{\triangle}
\mathcal{D}_{1}^{\alpha,\gamma,\kappa}[f]\,g\,w\,dx\,dy
=
\int_{\triangle}
f\,\mathcal{D}_{1}^{\alpha,\gamma,\kappa}[g]\,w\,dx\,dy ,
\]
which proves the result.
\end{proof}

The following triangular action is obtained by a direct computation from
\eqref{D1_agk_2}.

\begin{proposition}\label{prop:D1_triangular_action}
For $n,k\in\mathbb{N}_0$, 
\begin{equation*}%\label{eq:D1_triangular_action}
\begin{aligned}
\mathcal{D}_{1}^{\alpha,\gamma,\kappa}
\left[(x+y)^n(xy)^k\right] & =
\lambda_{n,k}^{\alpha,\gamma,\kappa}(x+y)^n(xy)^k\\
&\qquad
+
n(n+4k+2\alpha+2\kappa+2)(x+y)^{n-1}(xy)^k\\
&\qquad
+
k(k+\alpha)(x+y)^{n+1}(xy)^{k-1},
\end{aligned}
\end{equation*}
where terms with negative exponents are omitted, and
\begin{equation}\label{eq:lambda_agk}
\lambda_{n,k}^{\alpha,\gamma,\kappa}
=
-(n+2k)(n+2k+2\alpha+2\kappa+\gamma+3).
\end{equation}
\end{proposition}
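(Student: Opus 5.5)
The plan is to rewrite $\mathcal{D}_{1}^{\alpha,\gamma,\kappa}$ in the elementary symmetric variables $u=x+y$, $v=xy$. The operator will then act on the monomial $u^nv^k$ in an obvious way. Writing a symmetric polynomial as $f(u,v)$, the chain rule gives
\[
\partial_x=\partial_u+y\,\partial_v,\qquad \partial_y=\partial_u+x\,\partial_v .
\]
From these we get
\begin{align*}
\partial_x^2&=\partial_u^2+2y\,\partial_u\partial_v+y^2\partial_v^2,\\
\partial_y^2&=\partial_u^2+2x\,\partial_u\partial_v+x^2\partial_v^2,\\
\partial_x\partial_y&=\partial_u^2+u\,\partial_u\partial_v+v\,\partial_v^2+\partial_v .
\end{align*}
The extra first-order term $\partial_v$ in $\partial_x\partial_y$ is easy to overlook and must be kept. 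I will substitute these into \eqref{D1_agk_2} and collect the coefficients, each of which is a symmetric polynomial, directly in terms of $u$ and $v$.

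For the second-order part, a short computation gives the coefficients $u-u^2$ for $\partial_u^2$, $4v-4uv$ for $\partial_u\partial_v$, and $uv-4v^2$ for $\partial_v^2$. The cross term $-2xy\,\partial_x\partial_y$ also contributes $-2v\,\partial_v$. For the first-order part, set $c=2\alpha+\gamma+2\kappa+4$ and use
\[
\partial_x+\partial_y=2\partial_u+u\,\partial_v,\qquad x\partial_x+y\partial_y=u\,\partial_u+2v\,\partial_v .
\]
The singular term simplifies because $x\partial_x-y\partial_y=(x-y)\partial_u$, so it equals exactly $(2\kappa+1)\partial_u$. This shows at the same time why the operator preserves symmetric polynomials. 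Altogether,
\[
\mathcal{D}_{1}^{\alpha,\gamma,\kappa}
=(u-u^2)\partial_u^2+4v(1-u)\partial_u\partial_v+(uv-4v^2)\partial_v^2
+\bigl[(2\alpha+2\kappa+3)-cu\bigr]\partial_u+\bigl[(\alpha+1)u-(2c+2)v\bigr]\partial_v .
\]

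Next I apply this operator to $u^nv^k$ and sort the terms by how they shift the exponents.

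\emph{Diagonal terms} (those reproducing $u^nv^k$) come from $-u^2\partial_u^2$, $-4uv\,\partial_u\partial_v$, $-4v^2\partial_v^2$, $-cu\,\partial_u$ and $-(2c+2)v\,\partial_v$. Their sum is
\[
-n(n-1)-4nk-4k(k-1)-cn-(2c+2)k .
\]
This should be checked to equal $-(n+2k)(n+2k+c-1)$, which is \eqref{eq:lambda_agk} because $c-1=2\alpha+\gamma+2\kappa+3$.

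\emph{Terms lowering $n$ by one} come from $u\,\partial_u^2$, $4v\,\partial_u\partial_v$ and the constant times $\partial_u$. They give
\[
n(n-1)+4nk+(2\alpha+2\kappa+3)n=n(n+4k+2\alpha+2\kappa+2).
\]

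\emph{Terms of type $u^{n+1}v^{k-1}$} come from $uv\,\partial_v^2$ and $(\alpha+1)u\,\partial_v$. They give
\[
k(k-1)+(\alpha+1)k=k(k+\alpha).
\]
When $n=0$ or $k=0$ these coefficients vanish, which justifies omitting the terms with negative exponents.

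The only real obstacle is bookkeeping in the change of variables: the hidden $\partial_v$ in $\partial_x\partial_y$, and the simplification of the $\partial_v^2$ coefficient $x^2y(1-y)+xy^2(1-x)-2x^2y^2=uv-4v^2$. I would verify these on a low-degree example such as $f=v$ before assembling the three coefficients.
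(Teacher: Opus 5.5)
Your proposal is correct and is essentially the paper's own argument, which it describes only as a direct computation from \eqref{D1_agk_2}. Your $(u,v)$-form of $\mathcal{D}_{1}^{\alpha,\gamma,\kappa}$ matches the one the paper records in Section~\ref{sec:uv}, and your diagonal, $u^{n-1}v^k$, and $u^{n+1}v^{k-1}$ coefficient checks all go through.
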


In particular, if a symmetric polynomial $p(x,y)$ has the form
\[
p(x,y)
=
c_{n,k}(x+y)^n(xy)^k
+
\text{a symmetric polynomial of degree lower than }(n,k),
\]
then Proposition~\ref{prop:D1_triangular_action} implies that
\begin{align*}
\mathcal{D}_{1}^{\alpha,\gamma,\kappa}[p(x,y)]
&=
\lambda_{n,k}^{\alpha,\gamma,\kappa}
c_{n,k}(x+y)^n(xy)^k
\\
&\quad
+
\text{a symmetric polynomial of degree lower than }(n,k).
\end{align*}
Therefore, $\mathcal{D}_{1}^{\alpha,\gamma,\kappa}$ does not increase the
symmetric degree.

\begin{theorem}\label{th:D1_eigen}
Let $\{p_{n,k}^{\alpha,\gamma,\kappa}(x,y):\ n,k\in\mathbb{N}_0\}$ be the orthogonal basis
constructed above. Then
\[
\mathcal{D}_{1}^{\alpha,\gamma,\kappa}
\big[p_{n,k}^{\alpha,\gamma,\kappa}(x,y)\big]
=
\lambda_{n,k}^{\alpha,\gamma,\kappa}\,
p_{n,k}^{\alpha,\gamma,\kappa}(x,y),
\]
where $\lambda_{n,k}^{\alpha,\gamma,\kappa}$ is given in
\eqref{eq:lambda_agk}.
\end{theorem}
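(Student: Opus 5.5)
The plan is the standard argument combining triangularity with self-adjointness. Fix $(n,k)$ and set $q=\mathcal{D}_{1}^{\alpha,\gamma,\kappa}[p_{n,k}^{\alpha,\gamma,\kappa}]$. Since $p_{n,k}^{\alpha,\gamma,\kappa}$ is monic with leading term $(x+y)^n(xy)^k$, the consequence of Proposition~\ref{prop:D1_triangular_action} stated after it gives
\[
q=\lambda_{n,k}^{\alpha,\gamma,\kappa}(x+y)^n(xy)^k+\text{(symmetric degree lower than }(n,k)).
\]
Here we must check that each lower monomial $(x+y)^m(xy)^l$ with $(m,l)\prec(n,k)$ is mapped to a combination of monomials still $\prec(n,k)$. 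This holds because the three images $(m,l)$, $(m-1,l)$, $(m+1,l-1)$ have weighted degrees $m+2l$, $m+2l-1$, $m+2l-1$, and none of these exceeds $(m,l)$ in $\preceq$. Hence $q-\lambda_{n,k}^{\alpha,\gamma,\kappa}p_{n,k}^{\alpha,\gamma,\kappa}$ is a symmetric polynomial $r$ whose symmetric degree is strictly below $(n,k)$, or $r=0$.

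Next we show that $r$ is orthogonal to every symmetric polynomial $s$ with symmetric degree $\preceq$ strictly below $(n,k)$. By Proposition~\ref{prop:D1_agk_selfadjoint},
\[
\int_\triangle q\,s\,w=\int_\triangle p_{n,k}^{\alpha,\gamma,\kappa}\,\mathcal{D}_{1}^{\alpha,\gamma,\kappa}[s]\,w .
\]
By the triangularity argument above, $\mathcal{D}_{1}^{\alpha,\gamma,\kappa}[s]$ has symmetric degree $\prec(n,k)$, so this integral vanishes by the orthogonality of $p_{n,k}^{\alpha,\gamma,\kappa}$. The term $\lambda_{n,k}^{\alpha,\gamma,\kappa}\int p_{n,k}^{\alpha,\gamma,\kappa}s\,w$ also vanishes. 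Therefore $\int_\triangle r\,s\,w=0$ for all such $s$.

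Finally, take $s=r$. This is allowed because $r$ itself has degree $\prec(n,k)$. We get $\int_\triangle r^2w=0$, and since the inner product is positive definite on symmetric polynomials (the weight is positive in the interior of $\triangle$), it follows that $r=0$. This gives the eigenvalue equation with $\lambda_{n,k}^{\alpha,\gamma,\kappa}$ from \eqref{eq:lambda_agk}.

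The main point needing care is the first step: verifying that $\mathcal{D}_{1}^{\alpha,\gamma,\kappa}$ preserves the filtration by $\preceq$ for all lower monomials, not only the leading one. This reduces to the index check above using the three-term formula of Proposition~\ref{prop:D1_triangular_action}. The step $(m+1,l-1)$ needs attention because it raises $m$, but its weighted degree drops by one, so it is still lower. Everything else follows directly from the earlier propositions.
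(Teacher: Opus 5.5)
Your proof is correct and is essentially the paper's argument: the triangular action of Proposition~\ref{prop:D1_triangular_action}, then self-adjointness (Proposition~\ref{prop:D1_agk_selfadjoint}) together with orthogonality to show the lower-order remainder vanishes. The paper expands that remainder in the basis $p_{m,l}^{\alpha,\gamma,\kappa}$ and kills each coefficient using $h_{m,l}^{\alpha,\gamma,\kappa}>0$, whereas you pair the remainder with itself, which is only a cosmetic difference; your check that the index shifts $(m-1,l)$ and $(m+1,l-1)$ stay $\prec(m,l)$ spells out a step the paper leaves implicit.
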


\begin{proof}
Since $p_{n,k}^{\alpha,\gamma,\kappa}$ has symmetric degree $(n,k)$, the
triangular action in Proposition~\ref{prop:D1_triangular_action} gives
\[
\mathcal{D}_{1}^{\alpha,\gamma,\kappa}
\big[p_{n,k}^{\alpha,\gamma,\kappa}\big]
=
\lambda_{n,k}^{\alpha,\gamma,\kappa}
p_{n,k}^{\alpha,\gamma,\kappa}
+
\sum_{(m,l)\prec(n,k)}
c_{m,l}\,
p_{m,l}^{\alpha,\gamma,\kappa}
\]
for some coefficients $c_{m,l}$. For each $(m,l)\prec(n,k)$, set
\[
h_{m,l}^{\alpha,\gamma,\kappa}
=
\int_{\triangle}
\bigl(p_{m,l}^{\alpha,\gamma,\kappa}(x,y)\bigr)^2
w_{\alpha,\gamma,\kappa}(x,y)\,dx\,dy .
\]
Since $\alpha,\gamma,\kappa>-1$ and \(\alpha+\kappa>-\frac32\), the weight
$w_{\alpha,\gamma,\kappa}$ is integrable on $\triangle$ and is strictly
positive in its interior. Moreover,
$p_{m,l}^{\alpha,\gamma,\kappa}$ is a nonzero polynomial, and hence it
cannot vanish on any nonempty open subset of $\triangle$. It follows that $0<h_{m,l}^{\alpha,\gamma,\kappa}<\infty$. Therefore,
\begin{align*}
c_{m,l}\,h_{m,l}^{\alpha,\gamma,\kappa}
&=
\int_{\triangle}
\mathcal{D}_{1}^{\alpha,\gamma,\kappa}
\big[p_{n,k}^{\alpha,\gamma,\kappa}(x,y)\big]\,
p_{m,l}^{\alpha,\gamma,\kappa}(x,y)\,
w_{\alpha,\gamma,\kappa}(x,y)\,dx\,dy .
\end{align*}
By Proposition~\ref{prop:D1_agk_selfadjoint},
\begin{align*}
c_{m,l}\,h_{m,l}^{\alpha,\gamma,\kappa}
&=
\int_{\triangle}
p_{n,k}^{\alpha,\gamma,\kappa}(x,y)\,
\mathcal{D}_{1}^{\alpha,\gamma,\kappa}
\big[p_{m,l}^{\alpha,\gamma,\kappa}(x,y)\big]\,
w_{\alpha,\gamma,\kappa}(x,y)\,dx\,dy .
\end{align*}
Since $(m,l)\prec(n,k)$,
Proposition~\ref{prop:D1_triangular_action} implies that $\mathcal{D}_{1}^{\alpha,\gamma,\kappa}
\big[p_{m,l}^{\alpha,\gamma,\kappa}\big]$ is a linear combination of basis elements
$p_{r,s}^{\alpha,\gamma,\kappa}$ with
$(r,s)\preceq(m,l)\prec(n,k)$. Hence, by orthogonality, the last integral
vanishes. Therefore
\[
c_{m,l}\,h_{m,l}^{\alpha,\gamma,\kappa}=0.
\]
Since $h_{m,l}^{\alpha,\gamma,\kappa}>0$, we conclude that
$c_{m,l}=0$ for all $(m,l)\prec(n,k)$. This proves the result.
\end{proof}

%------------------------------------------------------------------------------
\section{Lowering and raising operators}\label{sec:ladder}
%------------------------------------------------------------------------------

The second-order operator obtained in the previous section gives one part of
the spectral structure of the family. We now introduce a pair of adjoint
operators that relates neighboring orthogonal systems and leads to a second
operator acting diagonally on the same basis.

We introduce the operators
\begin{equation}\label{Dminus_agk}
D_{-}
:=
\frac{\partial_y-\partial_x}{x-y},
\end{equation}
and
\begin{equation}\label{Dplus_agk}
D_{+}^{\alpha,\kappa}
:=
\frac{-1}{(xy)^\alpha (x-y)^{2\kappa}}
D_{-}\left[
(xy)^{\alpha+1}(x-y)^{2\kappa+2}\,\cdot
\right].
\end{equation}
Note that $D_{-}$ does not depend on the parameters
$\alpha$ and $\kappa$. Although $D_{-}$ has an apparent
singularity on the diagonal $x=y$, it is well defined on symmetric
polynomials, since $(\partial_y-\partial_x)f$ is divisible by $x-y$
whenever $f$ is symmetric.

The operator $D_{+}^{\alpha,\kappa}$ admits the representation
\begin{equation}\label{Dplus_agk_expanded}
D_{+}^{\alpha,\kappa}
=
xy(x-y)(\partial_x-\partial_y)
+
4(\kappa+1)xy
-
(\alpha+1)(x-y)^2 .
\end{equation}

We now show the adjoint relation satisfied by the operators $D_{-}$ and
$D_{+}^{\alpha,\kappa}$.

\begin{proposition}\label{prop:D_adjoints_agk}
Let $f(x,y)$ and $g(x,y)$ be symmetric polynomials. Then
\begin{align*}
\int_{\triangle}
D_{-}[f(x,y)]\,g(x,y)&\,
w_{\alpha+1,\gamma,\kappa+1}(x,y)\,dx\,dy
\notag\\
& =
\int_{\triangle}
f(x,y)\,D_{+}^{\alpha,\kappa}[g(x,y)]\,
w_{\alpha,\gamma,\kappa}(x,y)\,dx\,dy .
%\label{eq:D_adjoints_agk}
\end{align*}
\end{proposition}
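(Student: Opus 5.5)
The plan is to prove the identity by integrating by parts along the direction $\partial_z=\partial_y-\partial_x$ on the chamber $\triangle$, after absorbing the singular factor of $D_-$ into the weight. On $\triangle$ we have $x>y$, so $|x-y|=x-y$. Then
\[
\frac{w_{\alpha+1,\gamma,\kappa+1}}{x-y}
=(xy)^{\alpha+1}z^{\gamma}(x-y)^{2\kappa+2},\qquad z=1-x-y .
\]
Since $\partial_z z=0$, the factor $z^\gamma$ commutes with $\partial_z$. Setting $\Phi=(xy)^{\alpha+1}(x-y)^{2\kappa+2}$, the left-hand side becomes
\[
\int_{\triangle} z^{\gamma}\,\Phi\, g\,\partial_z f\,dx\,dy .
\]
Integrating by parts in the direction $\partial_z$ moves the derivative onto $z^\gamma\Phi g$, giving $-\int_\triangle z^\gamma f\,\partial_z(\Phi g)\,dx\,dy$ plus a boundary term. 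From the definition \eqref{Dplus_agk} of $D_{+}^{\alpha,\kappa}$,
\[
-\partial_z(\Phi g)=(x-y)\,D_-[\Phi g]\cdot(-1)=(xy)^\alpha(x-y)^{2\kappa+1}D_{+}^{\alpha,\kappa}[g].
\]
Multiplying by $z^\gamma$ gives exactly $w_{\alpha,\gamma,\kappa}\,D_+^{\alpha,\kappa}[g]$, which is the right-hand side. As a sanity check, the expanded form \eqref{Dplus_agk_expanded} shows $D_+^{\alpha,\kappa}[g]$ is a polynomial, so every integrand is integrable under the standing assumptions.

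To make the integration by parts rigorous, I would work on the truncated region $\triangle_\varepsilon$ from the proof of Proposition~\ref{prop:D1_agk_selfadjoint}. I would apply the divergence theorem to the vector field $F=z^\gamma\Phi f g\,(-1,1)$, whose divergence is $\partial_z(z^\gamma \Phi f g)$. This yields
\[
\int_{\triangle_\varepsilon}\partial_z(z^\gamma\Phi fg)\,dx\,dy=\int_{\partial\triangle_\varepsilon} z^\gamma\Phi f g\,(-dy-dx).
\]
Expanding the left side by the product rule gives both integrals of the claim, so it remains to show the boundary term tends to $0$ as $\varepsilon\to0^+$.

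The boundary has three relevant pieces.
\begin{itemize}
\item \textbf{Hypotenuse $z=\varepsilon$.} Here $dx+dy=0$, so this piece vanishes identically. This is the reason for using the direction $\partial_z$, tangent to the edge $x+y=1$: no positivity of $\gamma+1$ is needed there.
\item \textbf{Edge $y=\varepsilon$.} The integrand carries the factor $y^{\alpha+1}=\varepsilon^{\alpha+1}\to0$, since $\alpha>-1$. It also carries $z^\gamma$ with $\gamma>-1$, which is integrable in $x$, together with bounded $(x-y)^{2\kappa+2}$ (note $2\kappa+2>0$) and bounded $fg$.
\item \textbf{Diagonal $x-y=\varepsilon$.} The factor $(x-y)^{2\kappa+2}=\varepsilon^{2\kappa+2}\to0$ since $\kappa>-1$. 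The remaining factor $(xy)^{\alpha+1}z^\gamma$ is integrable along the segment.
\end{itemize}

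Near the corners, in particular the origin where $x,y,x-y$ all degenerate, the smallness on each edge combines with the integrability of the remaining factors. If needed, I would cite the same estimates invoked as Lemma~\ref{lem:D1-boundary-estimates}.

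The main obstacle is this boundary-vanishing step: it needs uniform control of $\int z^\gamma(\cdot)$ along the truncated edges near the corners $(1,0)$ and the origin. I would handle it by bounding $fg$ by a constant and computing the one-dimensional integrals explicitly, each of which produces a positive power of $\varepsilon$.

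Once the boundary terms vanish, the limit of the area integrals follows from dominated convergence, using the integrability of both integrands.
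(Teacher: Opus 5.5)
Your proposal is correct and follows essentially the same route as the paper: the identity $D_{+}^{\alpha,\kappa}[g]\,w_{\alpha,\gamma,\kappa}=(\partial_x-\partial_y)\bigl[g\,(xy)^{\alpha+1}z^{\gamma}(x-y)^{2\kappa+2}\bigr]$, Green's theorem on $\triangle_\varepsilon$, and a side-by-side boundary estimate. In that estimate the side $z=\varepsilon$ drops out because $dx+dy=0$, and the factors $\varepsilon^{\alpha+1}$ and $\varepsilon^{2\kappa+2}$ kill the other two sides. One small slip: the relevant boundary estimate is Lemma~\ref{lem:ladder-boundary-estimates}, not Lemma~\ref{lem:D1-boundary-estimates}, although your own direct bounds already suffice.
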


\begin{proof}
Write $z=1-x-y$. Since $(\partial_x-\partial_y)z=0$, it follows from
\eqref{Dplus_agk} that
\begin{equation}\label{eq:Dplus_weight_identity}
D_{+}^{\alpha,\kappa}[g]\,w_{\alpha,\gamma,\kappa}(x,y)
=
(\partial_x-\partial_y)
\left[
g\,(xy)^{\alpha+1}z^\gamma(x-y)^{2\kappa+2}
\right].
\end{equation}

For $\varepsilon>0$, set
\[
\triangle_\varepsilon
=
\{(x,y)\in\triangle:\ y>\varepsilon,\ z>\varepsilon,\ x-y>\varepsilon\}.
\]
On $\triangle_\varepsilon$ all coefficients are regular. Since $x-y>0$
on $\triangle$, we have
\[
w_{\alpha+1,\gamma,\kappa+1}(x,y)
=
(xy)^{\alpha+1}z^\gamma(x-y)^{2\kappa+3}.
\]
Using \eqref{Dminus_agk}, we obtain
\begin{align*}
\int_{\triangle_\varepsilon}
D_{-}[f]\,g\,w_{\alpha+1,\gamma,\kappa+1}\,dx\,dy
&=
\int_{\triangle_\varepsilon}
(\partial_y-\partial_x)f\,
g\,(xy)^{\alpha+1}z^\gamma(x-y)^{2\kappa+2}\,dx\,dy .
\end{align*}
Applying Green's theorem on $\triangle_\varepsilon$ gives
\begin{equation}
\begin{aligned}
\int_{\triangle_\varepsilon}
D_{-}[f]\,g&\,w_{\alpha+1,\gamma,\kappa+1}\,dx\,dy \\
&=
\int_{\triangle_\varepsilon}
f\,(\partial_x-\partial_y)
\left[
g\,(xy)^{\alpha+1}z^\gamma(x-y)^{2\kappa+2}
\right]dx\,dy
+
R_\varepsilon ,
\end{aligned}
\label{eq:Dadj_green}
\end{equation}
where $R_\varepsilon$ is the boundary contribution. By
Lemma~\ref{lem:ladder-boundary-estimates},
\[
R_\varepsilon\longrightarrow0
\qquad
\text{as }\varepsilon\to0^+.
\]

The weight $w_{\alpha,\gamma,\kappa}$ is integrable since
$\alpha,\gamma,\kappa>-1$ and $\alpha+\kappa>-\frac32$. The shifted
weight $w_{\alpha+1,\gamma,\kappa+1}$ is also integrable, since
\[
(\alpha+1)+(\kappa+1)>-\frac32
\]
follows automatically from \(\alpha,\kappa>-1\). Thus the two
integrals in \eqref{eq:Dadj_green} are finite. Letting
$\varepsilon\to0^+$ in \eqref{eq:Dadj_green} and using
\eqref{eq:Dplus_weight_identity}, we obtain
\[
\int_{\triangle}
D_{-}[f]\,g\,w_{\alpha+1,\gamma,\kappa+1}\,dx\,dy
=
\int_{\triangle}
f\,D_{+}^{\alpha,\kappa}[g]\,
w_{\alpha,\gamma,\kappa}\,dx\,dy,
\]
as claimed.
\end{proof}

The following identities follow by a direct computation from
\eqref{Dminus_agk} and \eqref{Dplus_agk_expanded}.

\begin{proposition}\label{prop:D_ladder_monomial_action}
For $n,k\in\mathbb{N}_0$,
\begin{equation*}%\label{eq:Dminus_monomial_action}
D_{-}\left[(x+y)^n(xy)^k\right]
=
k(x+y)^n(xy)^{k-1},
\end{equation*}
where the right-hand side is understood to be zero when $k=0$. Moreover,
\begin{equation}\label{eq:Dplus_monomial_action}
\begin{aligned}
D_{+}^{\alpha,\kappa}
\left[(x+y)^n(xy)^k\right]
&=
-(k+\alpha+1)(x+y)^{n+2}(xy)^k
\\
&\quad
+
4(k+\alpha+\kappa+2)(x+y)^n(xy)^{k+1}.
\end{aligned}
\end{equation}
\end{proposition}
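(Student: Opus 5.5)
The plan is to compute directly, working in the symmetric variables $s=x+y$, $p=xy$, and using the first-order identities
\[
(\partial_x-\partial_y)s=0,\qquad (\partial_x-\partial_y)p=y-x .
\]
By the chain rule, for any $F(s,p)$,
\[
(\partial_y-\partial_x)F(s,p)=(x-y)\,\partial_pF .
\]
Hence $D_-$ acts on symmetric polynomials simply as $\partial_p$. Applied to $s^np^k$, this gives
\[
D_-[(x+y)^n(xy)^k]=k(x+y)^n(xy)^{k-1},
\]
which is the first identity. It also confirms that $D_-$ preserves symmetric polynomials, and the case $k=0$ gives zero automatically.

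For $D_{+}^{\alpha,\kappa}$ I will use the expanded form \eqref{Dplus_agk_expanded}. First, the identity above gives
\[
(\partial_x-\partial_y)F=-(x-y)\,\partial_pF ,
\]
so
\[
xy(x-y)(\partial_x-\partial_y)\bigl[s^np^k\bigr]=-(x-y)^2\,k\,s^np^k .
\]
Next I rewrite $(x-y)^2=s^2-4p$. This turns the whole operator into multiplication and differentiation in $(s,p)$:
\[
D_{+}^{\alpha,\kappa}\bigl[s^np^k\bigr]
=-k(s^2-4p)s^np^k+4(\kappa+1)s^np^{k+1}-(\alpha+1)(s^2-4p)s^np^k .
\]
Collecting terms gives
\[
D_{+}^{\alpha,\kappa}\bigl[s^np^k\bigr]
=-(k+\alpha+1)s^{n+2}p^k+4(k+\alpha+\kappa+2)s^np^{k+1},
\]
which is \eqref{eq:Dplus_monomial_action}.

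The only step that needs care is checking the expanded formula \eqref{Dplus_agk_expanded} against the definition \eqref{Dplus_agk}, in case one prefers not to take it as given. I would do this with the same chain-rule identity, applying $D_-$ to the product
\[
(xy)^{\alpha+1}(x-y)^{2\kappa+2}g .
\]
Using $(\partial_y-\partial_x)(x-y)=-2$ and $(\partial_y-\partial_x)(xy)=x-y$, the three resulting terms, after multiplication by $-1/\bigl((xy)^\alpha(x-y)^{2\kappa}\bigr)$, reproduce exactly the coefficients $4(\kappa+1)xy$, $-(\alpha+1)(x-y)^2$ and the derivative term. Sign bookkeeping between $\partial_x-\partial_y$ and $\partial_y-\partial_x$ is the main place an error could creep in, so I would track it explicitly.
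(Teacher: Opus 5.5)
Your proof is correct and follows the paper's route: the paper only says that both identities come from a direct computation using \eqref{Dminus_agk} and \eqref{Dplus_agk_expanded}, and your reduction $(\partial_y-\partial_x)F(x+y,xy)=(x-y)\,\partial_pF$ combined with $(x-y)^2=(x+y)^2-4xy$ is that computation carried out. It is also the reduction the paper makes explicit in Section~\ref{sec:uv}, where $D_-=\partial_v$. Your optional check of \eqref{Dplus_agk_expanded} against \eqref{Dplus_agk} is correct as well, signs included.
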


Consequently, $D_{-}$ lowers the leading symmetric degree according to $(n,k)\mapsto(n,k-1)$, whereas, with respect to the order $\preceq$, $D_{+}^{\alpha,\kappa}$
raises the leading symmetric degree according to $(n,k)\mapsto(n,k+1)$. Indeed, the two terms in \eqref{eq:Dplus_monomial_action} have the same
weighted sum of indices, but the second has larger second index.

\begin{theorem}\label{th:ladderD_agk}
Let $\{p_{n,k}^{\alpha,\gamma,\kappa}(x,y):\ n,k\in\mathbb{N}_0\}$ be the orthogonal basis constructed above. Then
\[
D_{-}\big[p_{n,k}^{\alpha,\gamma,\kappa}(x,y)\big]
=
k\,p_{n,k-1}^{\alpha+1,\gamma,\kappa+1}(x,y),
\]
where the right-hand side is understood to be zero when $k=0$, and
\[
D_{+}^{\alpha,\kappa}
\big[p_{n,k}^{\alpha+1,\gamma,\kappa+1}(x,y)\big]
=
4(k+\alpha+\kappa+2)\,
p_{n,k+1}^{\alpha,\gamma,\kappa}(x,y).
\]
\end{theorem}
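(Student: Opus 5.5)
We argue exactly as in the proof of Theorem~\ref{th:D1_eigen}: first we identify the leading term of the image using Proposition~\ref{prop:D_ladder_monomial_action}, and then we kill all lower components using the adjoint relation of Proposition~\ref{prop:D_adjoints_agk} together with orthogonality.

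\emph{First identity.} Write $p_{n,k}^{\alpha,\gamma,\kappa}=(x+y)^n(xy)^k+\sum_{(m,l)\prec(n,k)}a_{m,l}(x+y)^m(xy)^l$. By Proposition~\ref{prop:D_ladder_monomial_action}, $D_-$ sends each $(x+y)^m(xy)^l$ to $l(x+y)^m(xy)^{l-1}$. The map $(m,l)\mapsto(m,l-1)$ is strictly monotone for $\preceq$: if $m+2l<n+2k$, the weighted sums keep the same difference, and if the sums are equal with $l<k$, then $l-1<k-1$. Hence
\[
D_-[p_{n,k}^{\alpha,\gamma,\kappa}]=k(x+y)^n(xy)^{k-1}+(\text{terms}\prec(n,k-1)).
\]
For $k=0$ every term has $l=0$, so the image is zero. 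For $k\ge1$ we expand $D_-[p_{n,k}^{\alpha,\gamma,\kappa}]=k\,p_{n,k-1}^{\alpha+1,\gamma,\kappa+1}+\sum_{(m,l)\prec(n,k-1)}c_{m,l}\,p_{m,l}^{\alpha+1,\gamma,\kappa+1}$.

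The coefficient $c_{m,l}$ is determined, up to the positive norm $h_{m,l}^{\alpha+1,\gamma,\kappa+1}$, by the integral of $D_-[p_{n,k}^{\alpha,\gamma,\kappa}]\,p_{m,l}^{\alpha+1,\gamma,\kappa+1}$ against $w_{\alpha+1,\gamma,\kappa+1}$. By Proposition~\ref{prop:D_adjoints_agk} this integral equals
\[
\int_\triangle p_{n,k}^{\alpha,\gamma,\kappa}\,D_+^{\alpha,\kappa}[p_{m,l}^{\alpha+1,\gamma,\kappa+1}]\,w_{\alpha,\gamma,\kappa}\,dx\,dy.
\]
By \eqref{eq:Dplus_monomial_action}, $D_+^{\alpha,\kappa}$ maps $(x+y)^m(xy)^l$ to a combination of monomials of symmetric degrees $(m+2,l)$ and $(m,l+1)$. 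Both have weighted sum $m+2l+2$, and the larger of the two is $(m,l+1)$. This index map is again monotone, so $D_+^{\alpha,\kappa}[p_{m,l}^{\alpha+1,\gamma,\kappa+1}]$ has symmetric degree at most $(m,l+1)$.

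Since $(m,l)\prec(n,k-1)$, we get $(m,l+1)\prec(n,k)$. Orthogonality of $p_{n,k}^{\alpha,\gamma,\kappa}$ then makes the integral vanish, so $c_{m,l}=0$.

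\emph{Second identity.} By the same monotonicity, $D_+^{\alpha,\kappa}[p_{n,k}^{\alpha+1,\gamma,\kappa+1}]$ has leading term $4(k+\alpha+\kappa+2)(x+y)^n(xy)^{k+1}$, and all other terms have degree $\prec(n,k+1)$. Expand it in the basis $p^{\alpha,\gamma,\kappa}_{m,l}$ with $(m,l)\prec(n,k+1)$. The coefficient of $p_{m,l}^{\alpha,\gamma,\kappa}$ is proportional to
\[
\int_\triangle D_+^{\alpha,\kappa}[p_{n,k}^{\alpha+1,\gamma,\kappa+1}]\,p_{m,l}^{\alpha,\gamma,\kappa}\,w_{\alpha,\gamma,\kappa}\,dx\,dy,
\]
which by Proposition~\ref{prop:D_adjoints_agk} equals the integral of $p_{n,k}^{\alpha+1,\gamma,\kappa+1}\,D_-[p_{m,l}^{\alpha,\gamma,\kappa}]$ against $w_{\alpha+1,\gamma,\kappa+1}$. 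Here $D_-[p_{m,l}^{\alpha,\gamma,\kappa}]$ has degree at most $(m,l-1)$, or vanishes if $l=0$. Since $(m,l)\prec(n,k+1)$ implies $(m,l-1)\prec(n,k)$, orthogonality gives zero.

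\emph{Main obstacle.} The analytic input, namely the adjoint relation and its boundary estimates, is already available from Proposition~\ref{prop:D_adjoints_agk}. What remains delicate is the order bookkeeping. In the tie case $m+2l=n+2k$ we must check that the two monomials produced by $D_+^{\alpha,\kappa}$ do not overshoot: the term $(x+y)^{m+2}(xy)^l$ has the same weighted sum as $(m,l+1)$ but a smaller second index. We must also confirm that the index shifts $(m,l)\mapsto(m,l\pm1)$ preserve the strict order $\prec$ in both the unequal-sum case and the equal-sum case.
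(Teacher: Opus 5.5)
Your argument for $k\ge1$, and your argument for the second identity, follow the paper's proof: you read off the leading term from Proposition~\ref{prop:D_ladder_monomial_action}, then remove the lower components using Proposition~\ref{prop:D_adjoints_agk} and orthogonality. Your order bookkeeping there is correct.

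The gap is the case $k=0$ of the first identity. You assert that every term of $p_{n,0}^{\alpha,\gamma,\kappa}$ has $l=0$. As a statement about the monomial expansion this is false. Only the leading monomial $(x+y)^n$ has $l=0$. The lower part ranges over all $(a,b)$ with $a+2b<n$, and this includes $b\ge1$ as soon as $n\ge3$.

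For example, the general form is $p_{3,0}^{\alpha,\gamma,\kappa}=(x+y)^3+a(x+y)^2+b\,xy+c(x+y)+d$, and nothing in the Gram--Schmidt construction forces $b=0$. Since $D_-[p_{3,0}^{\alpha,\gamma,\kappa}]=b$, the vanishing of $b$ is exactly what the $k=0$ case asserts; equivalently, that $q_{n,0}^{\alpha,\gamma,\kappa}$ depends on $u$ alone. It cannot be read off from the shape of the polynomial. The same unproved claim reappears in your second identity, where you use that $D_-[p_{m,l}^{\alpha,\gamma,\kappa}]$ vanishes for $l=0$.

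The fix is your own orthogonality argument with weighted-degree bookkeeping, which is what the paper does for $k=0$:
\begin{itemize}
\item $D_-$ sends the lower terms of $p_{n,0}^{\alpha,\gamma,\kappa}$ to monomials of weighted degree $a+2(b-1)\le n-3$. So $D_-[p_{n,0}^{\alpha,\gamma,\kappa}]$ expands in the $p_{m,l}^{\alpha+1,\gamma,\kappa+1}$ with $m+2l\le n-3$.
\item By Proposition~\ref{prop:D_adjoints_agk}, each coefficient is, up to a positive norm, the integral of $p_{n,0}^{\alpha,\gamma,\kappa}$ against $D_+^{\alpha,\kappa}[p_{m,l}^{\alpha+1,\gamma,\kappa+1}]\,w_{\alpha,\gamma,\kappa}$.
\item That polynomial has weighted degree at most $m+2l+2\le n-1$, hence symmetric degree $\prec(n,0)$, so every coefficient vanishes.
\end{itemize}
With this in hand, the $l=0$ terms in your second identity are covered. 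Alternatively, you can handle them directly: $(m,0)\prec(n,k+1)$ forces $m\le n+2k+2$. Then $D_-[p_{m,0}^{\alpha,\gamma,\kappa}]$ has weighted degree at most $m-3<n+2k$, so it is orthogonal to $p_{n,k}^{\alpha+1,\gamma,\kappa+1}$.
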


\begin{proof}
We first consider the case \(k\geq1\). By
Proposition~\ref{prop:D_ladder_monomial_action},
\[
D_{-}\big[p_{n,k}^{\alpha,\gamma,\kappa}\big]
=
k\,p_{n,k-1}^{\alpha+1,\gamma,\kappa+1}
+
\sum_{(m,l)\prec(n,k-1)}
c_{m,l}\,p_{m,l}^{\alpha+1,\gamma,\kappa+1}.
\]
For \(m,l\in\mathbb N_0\), set
\[
h_{m,l}^{\alpha+1,\gamma,\kappa+1}
=
\int_{\triangle}
\bigl(p_{m,l}^{\alpha+1,\gamma,\kappa+1}(x,y)\bigr)^2
w_{\alpha+1,\gamma,\kappa+1}(x,y)\,dx\,dy .
\]
Since
\[
\alpha+1,\gamma,\kappa+1>-1,
\qquad
(\alpha+1)+(\kappa+1)>-\frac32,
\]
the shifted weight is integrable and strictly positive in the interior
of $\triangle$. Hence
\[
0<h_{m,l}^{\alpha+1,\gamma,\kappa+1}<\infty.
\]
Therefore,
\[
\begin{aligned}
c_{m,l}h_{m,l}^{\alpha+1,\gamma,\kappa+1}
=
\int_{\triangle}
D_{-}\big[p_{n,k}^{\alpha,\gamma,\kappa}(x,y)\big]\,
p_{m,l}^{\alpha+1,\gamma,\kappa+1}(x,y) w_{\alpha+1,\gamma,\kappa+1}(x,y)\,dx\,dy .
\end{aligned}
\]
Using Proposition~\ref{prop:D_adjoints_agk}, we get
\[
\begin{aligned}
c_{m,l}h_{m,l}^{\alpha+1,\gamma,\kappa+1}
=
\int_{\triangle}
p_{n,k}^{\alpha,\gamma,\kappa}(x,y)\,
D_{+}^{\alpha,\kappa}
\big[p_{m,l}^{\alpha+1,\gamma,\kappa+1}(x,y)\big]
w_{\alpha,\gamma,\kappa}(x,y)\,dx\,dy .
\end{aligned}
\]
Since $(m,l)\prec(n,k-1)$, Proposition~\ref{prop:D_ladder_monomial_action}
implies that $D_{+}^{\alpha,\kappa}
\big[p_{m,l}^{\alpha+1,\gamma,\kappa+1}\big]$ is a symmetric polynomial of degree lower than $(n,k)$. Hence the last
integral vanishes by orthogonality. Therefore $c_{m,l}=0$.

Now let \(k=0\). Since
\(p_{n,0}^{\alpha,\gamma,\kappa}\) has leading term \((x+y)^n\), it can
be written as
\[
p_{n,0}^{\alpha,\gamma,\kappa}
=
(x+y)^n
+
\sum_{(a,b)\prec(n,0)}
d_{a,b}(x+y)^a(xy)^b .
\]
By Proposition~\ref{prop:D_ladder_monomial_action}, the leading term is
annihilated by \(D_-\), as are all the terms with \(b=0\). If \(b\geq1\),
then
\[
D_-\left[(x+y)^a(xy)^b\right]
=
b(x+y)^a(xy)^{b-1}.
\]
Since \((a,b)\prec(n,0)\) implies \(a+2b<n\), every nonzero term in
\(D_-[p_{n,0}^{\alpha,\gamma,\kappa}]\) has weighted degree
\(
a+2(b-1)\leq n-3
\). We may therefore write
\[
D_-\big[p_{n,0}^{\alpha,\gamma,\kappa}\big]
=
\sum_{m+2l\leq n-3}
c_{m,l}\,
p_{m,l}^{\alpha+1,\gamma,\kappa+1}.
\]
For each term in this expansion, Proposition~\ref{prop:D_adjoints_agk}
gives
\[
\begin{aligned}
c_{m,l}h_{m,l}^{\alpha+1,\gamma,\kappa+1}
=
\int_{\triangle}
p_{n,0}^{\alpha,\gamma,\kappa}(x,y)\,
D_+^{\alpha,\kappa}
\big[p_{m,l}^{\alpha+1,\gamma,\kappa+1}(x,y)\big]
w_{\alpha,\gamma,\kappa}(x,y)\,dx\,dy .
\end{aligned}
\]
By Proposition~\ref{prop:D_ladder_monomial_action}, the polynomial
\(D_+^{\alpha,\kappa}
[p_{m,l}^{\alpha+1,\gamma,\kappa+1}]\) has degree at most
\((m,l+1)\), whose weighted degree satisfies
\[
m+2(l+1)\leq n-1<n.
\]
The integral therefore vanishes by orthogonality. Hence
\(c_{m,l}=0\) for every \(m,l\), and
\[
D_-\big[p_{n,0}^{\alpha,\gamma,\kappa}\big]=0.
\]

This proves the first identity. The second follows analogously, using
\eqref{eq:Dplus_monomial_action} and Proposition~\ref{prop:D_adjoints_agk}.
\end{proof}

We define the second-order operator
\begin{equation*}%\label{D2_agk}
\mathcal{D}_{2}^{\alpha,\kappa}
:=
D_{+}^{\alpha,\kappa}D_{-}.
\end{equation*}

\begin{corollary}\label{coro:D2_agk}
Let $\{p_{n,k}^{\alpha,\gamma,\kappa}(x,y):\ n,k\in\mathbb{N}_0\}$ be the orthogonal basis constructed above. Then
\[
\mathcal{D}_{2}^{\alpha,\kappa}
\big[p_{n,k}^{\alpha,\gamma,\kappa}(x,y)\big]
=
4k(k+\alpha+\kappa+1)\,
p_{n,k}^{\alpha,\gamma,\kappa}(x,y).
\]
\end{corollary}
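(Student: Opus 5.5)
The plan is to compose the two identities of Theorem~\ref{th:ladderD_agk}. First apply the lowering identity with parameters $(\alpha,\gamma,\kappa)$. For $k\ge1$ it gives
\[
D_{-}\big[p_{n,k}^{\alpha,\gamma,\kappa}\big]=k\,p_{n,k-1}^{\alpha+1,\gamma,\kappa+1}.
\]
Next apply the raising identity to the polynomial $p_{n,k-1}^{\alpha+1,\gamma,\kappa+1}$, that is, with index $k-1$ in place of $k$. This gives
\[
D_{+}^{\alpha,\kappa}\big[p_{n,k-1}^{\alpha+1,\gamma,\kappa+1}\big]
=4\bigl((k-1)+\alpha+\kappa+2\bigr)\,p_{n,k}^{\alpha,\gamma,\kappa}
=4(k+\alpha+\kappa+1)\,p_{n,k}^{\alpha,\gamma,\kappa}.
\]
Since $\mathcal D_2^{\alpha,\kappa}=D_+^{\alpha,\kappa}D_-$ and $D_+^{\alpha,\kappa}$ is linear, multiplying by $k$ yields the eigenvalue $4k(k+\alpha+\kappa+1)$.

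For $k=0$, Theorem~\ref{th:ladderD_agk} gives $D_-\big[p_{n,0}^{\alpha,\gamma,\kappa}\big]=0$. Hence $\mathcal D_2^{\alpha,\kappa}p_{n,0}^{\alpha,\gamma,\kappa}=0$, which agrees with the formula because $4\cdot0\cdot(\alpha+\kappa+1)=0$.

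The only point that needs care is the raising identity. The paper proves it only by saying it ``follows analogously'', so I would check that it really holds. The check uses the adjoint relation of Proposition~\ref{prop:D_adjoints_agk}, now read with $D_+$ on the left. Take $(m,l)\prec(n,k+1)$. Then
\[
\int_\triangle D_+^{\alpha,\kappa}\big[p_{n,k}^{\alpha+1,\gamma,\kappa+1}\big]\,p_{m,l}^{\alpha,\gamma,\kappa}\,w_{\alpha,\gamma,\kappa}
=\int_\triangle p_{n,k}^{\alpha+1,\gamma,\kappa+1}\,D_-\big[p_{m,l}^{\alpha,\gamma,\kappa}\big]\,w_{\alpha+1,\gamma,\kappa+1}.
\]
By Proposition~\ref{prop:D_ladder_monomial_action}, $D_-$ maps the symmetric degree $(m,l)$ to at most $(m,l-1)$, and $(m,l-1)\prec(n,k)$. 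So the right-hand integral vanishes by orthogonality of the $p^{\alpha+1,\gamma,\kappa+1}$. The leading coefficient $4(k+\alpha+\kappa+2)$ in front of $(x+y)^n(xy)^{k+1}$ is read off from \eqref{eq:Dplus_monomial_action}.

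Alternatively, the result can be obtained without the ladder theorem, in the style of the proof of Theorem~\ref{th:D1_eigen}. Compute $\mathcal D_2^{\alpha,\kappa}$ on monomials from Proposition~\ref{prop:D_ladder_monomial_action}:
\[
\mathcal D_2^{\alpha,\kappa}\big[(x+y)^n(xy)^k\big]
=4k(k+\alpha+\kappa+1)(x+y)^n(xy)^k-k(k+\alpha)(x+y)^{n+2}(xy)^{k-1}.
\]
The second term is lower in the order $\preceq$, so $\mathcal D_2^{\alpha,\kappa}$ is triangular with the stated diagonal entry. Proposition~\ref{prop:D_adjoints_agk} applied twice shows that $\mathcal D_2^{\alpha,\kappa}$ is symmetric with respect to $w_{\alpha,\gamma,\kappa}$. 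The same orthogonality argument used for Theorem~\ref{th:D1_eigen} then removes all lower-order terms and gives the eigenvalue equation.
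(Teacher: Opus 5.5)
Your proposal is correct and follows the paper's route: the paper states the corollary as an immediate consequence of Theorem~\ref{th:ladderD_agk}, obtained by composing $D_-[p_{n,k}^{\alpha,\gamma,\kappa}]=k\,p_{n,k-1}^{\alpha+1,\gamma,\kappa+1}$ with the raising identity at index $k-1$, with $k=0$ trivial. Your additional check of the raising identity via Proposition~\ref{prop:D_adjoints_agk} is sound, and so is your alternative argument, which combines the triangular action with the symmetry $\int_\triangle \mathcal D_2^{\alpha,\kappa}[f]\,g\,w_{\alpha,\gamma,\kappa}=\int_\triangle D_-[f]\,D_-[g]\,w_{\alpha+1,\gamma,\kappa+1}$.
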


Thus the ladder construction produces a second spectral operator associated
with the same orthogonal basis. This operator will be used later in the
description of the differential-operator algebra.

%------------------------------------------------------------------------------
\section{An explicit Jacobi basis}\label{sec:explicit-jacobi}
%------------------------------------------------------------------------------

The preceding sections characterize the orthogonal basis through its
differential operators. We now give an explicit Jacobi representation of the
same basis.

\begin{definition}%\label{def:S_nk_agk}
For $n,k\in\mathbb{N}_0$, define
\begin{equation}\label{S_nk_agk}
S_{n,k}^{\alpha,\gamma,\kappa}(x,y)
=
(x+y)^{2k}
P_n^{(2\alpha+2\kappa+4k+2,\gamma)}(1-2x-2y)
P_k^{(\kappa,\alpha)}
\left(1-2\frac{(x-y)^2}{(x+y)^2}\right).
\end{equation}
\end{definition}

Although the last factor involves the quotient $(x-y)^2/(x+y)^2$, the factor $(x+y)^{2k}$ clears the denominator. Hence
$S_{n,k}^{\alpha,\gamma,\kappa}(x,y)$ is a symmetric polynomial in $x$ and
$y$.

\begin{theorem}\label{th:S_nk_orthogonal}
For $n,k\in\mathbb{N}_0$, let
$S_{n,k}^{\alpha,\gamma,\kappa}(x,y)$ be the polynomials defined in
\eqref{S_nk_agk}. Then
\[
\left\{
S_{n,k}^{\alpha,\gamma,\kappa}(x,y):\ n,k\in\mathbb{N}_0
\right\}
\]
is a mutually orthogonal polynomial system with respect to
$w_{\alpha,\gamma,\kappa}$ on $\triangle$. Moreover,
\[
b_{\alpha,\gamma,\kappa}
\int_{\triangle}
S_{n,k}^{\alpha,\gamma,\kappa}(x,y)
S_{m,l}^{\alpha,\gamma,\kappa}(x,y)
w_{\alpha,\gamma,\kappa}(x,y)\,dx\,dy
=
H_{n,k}^{\alpha,\gamma,\kappa}\,
\delta_{n,m}\delta_{k,l},
\]
where
\[
b_{\alpha,\gamma,\kappa}
=
\left(
\int_{\triangle}
w_{\alpha,\gamma,\kappa}(x,y)\,dx\,dy
\right)^{-1},
\qquad
\alpha_j=2\alpha+2\kappa+4j+2,
\]
and
\begin{align*}
H_{n,k}^{\alpha,\gamma,\kappa}
&=
\frac{(\alpha_0+1)_{4k}}
{(\alpha_0+\gamma+2)_{4k}}
\frac{
(\alpha_k+1)_n(\gamma+1)_n(\alpha_k+\gamma+1)
}{
n!\,(\alpha_k+\gamma+1)_n(2n+\alpha_k+\gamma+1)
}
\\[5pt]
&\quad\times
\frac{
(\kappa+1)_k(\alpha+1)_k(\alpha+\kappa+1)
}{
k!\,(\alpha+\kappa+1)_k(2k+\alpha+\kappa+1)
}.
\end{align*}
For compactness, this formula is understood after cancelling the common
factors at exceptional parameter values.
\end{theorem}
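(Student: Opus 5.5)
The plan is to pass to coordinates on the chamber $\triangle$ in which both the weight and the polynomials $S_{n,k}^{\alpha,\gamma,\kappa}$ factor into a product of one-variable pieces. Set
\[
s=x+y,\qquad r=\frac{x-y}{x+y},
\]
so that $x=\tfrac{s}{2}(1+r)$ and $y=\tfrac{s}{2}(1-r)$. This map sends $(0,1)\times(0,1)$ bijectively onto the interior of $\triangle$ and has Jacobian $dx\,dy=\tfrac{s}{2}\,ds\,dr$. In these coordinates
\[
xy=\tfrac{s^2}{4}(1-r^2),\qquad x-y=sr,\qquad 1-x-y=1-s.
\]
Hence the weight becomes
\[
w_{\alpha,\gamma,\kappa}=4^{-\alpha}s^{2\alpha+2\kappa+1}(1-s)^\gamma(1-r^2)^\alpha r^{2\kappa+1}.
\]
Next substitute $\tau=r^2$, which gives $r^{2\kappa+1}\,dr=\tfrac12\tau^\kappa\,d\tau$. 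Altogether,
\[
w_{\alpha,\gamma,\kappa}\,dx\,dy=c\, s^{\alpha_0}(1-s)^{\gamma}\,ds\;\tau^\kappa(1-\tau)^\alpha\,d\tau ,
\]
where $\alpha_0=2\alpha+2\kappa+2$ and $c=4^{-\alpha}/4$. Since $(x-y)^2/(x+y)^2=\tau$, the polynomials take the form
\[
S_{n,k}^{\alpha,\gamma,\kappa}=s^{2k}P_n^{(\alpha_k,\gamma)}(1-2s)\,P_k^{(\kappa,\alpha)}(1-2\tau).
\]

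With this factorization, the integral of $S_{n,k}S_{m,l}w$ over $\triangle$ splits into a product of two one-dimensional integrals.

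The $\tau$-integral is $\int_0^1 P_k^{(\kappa,\alpha)}(1-2\tau)P_l^{(\kappa,\alpha)}(1-2\tau)\tau^\kappa(1-\tau)^\alpha d\tau$. The substitution $t=1-2\tau$ turns it into the Jacobi orthogonality integral for the weight $(1-t)^\kappa(1+t)^\alpha$, so it vanishes unless $k=l$.

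When $k=l$, the $s$-integral is $\int_0^1 P_n^{(\alpha_k,\gamma)}(1-2s)P_m^{(\alpha_k,\gamma)}(1-2s)\,s^{\alpha_0+4k}(1-s)^\gamma ds$. Because $\alpha_0+4k=\alpha_k$, the substitution $t=1-2s$ again gives exactly Jacobi orthogonality, now for the weight $(1-t)^{\alpha_k}(1+t)^\gamma$, so it vanishes unless $n=m$. This is precisely why the first Jacobi parameter must depend on $k$. Together these two facts give mutual orthogonality.

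For the norms, I will compute $b_{\alpha,\gamma,\kappa}^{-1}$ by the same factorization:
\[
b_{\alpha,\gamma,\kappa}^{-1}=c\,B(\alpha_0+1,\gamma+1)\,B(\kappa+1,\alpha+1).
\]
Then I will insert the classical norm $\int_0^1 \bigl(P_n^{(a,b)}(1-2s)\bigr)^2s^a(1-s)^b ds=B(a+1,b+1)\,h_n^{(a,b)}$, in which the constant $c$ cancels. The normalized Jacobi norm is
\[
h_n^{(a,b)}=\frac{(a+1)_n(b+1)_n(a+b+1)}{n!\,(a+b+1)_n(2n+a+b+1)}.
\]
The $k$-dependence of the $s$-weight contributes the ratio
\[
\frac{B(\alpha_k+1,\gamma+1)}{B(\alpha_0+1,\gamma+1)}=\frac{(\alpha_0+1)_{4k}}{(\alpha_0+\gamma+2)_{4k}}.
\]
Multiplying this ratio by $h_n^{(\alpha_k,\gamma)}\,h_k^{(\kappa,\alpha)}$ reproduces $H_{n,k}^{\alpha,\gamma,\kappa}$. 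The exceptional parameter values are handled by cancellation, as in the statement.

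I expect the main obstacle to be bookkeeping rather than any conceptual step. It has two parts: verifying the Jacobian and the power of $s$ exactly, so that $s^{4k}$ combines with $s^{\alpha_0}$ to produce $\alpha_k$; and confirming that the powers of $4$ and $2$ cancel against $b_{\alpha,\gamma,\kappa}$. Integrability near $r=0$ (where the measure in $r$ is $r^{2\kappa+1}dr$) and near $s=0$ holds under the standing assumptions $\alpha,\gamma,\kappa>-1$ and $\alpha+\kappa>-\tfrac32$ (the latter gives $\alpha_0>-1$). This justifies using Fubini on the product domain.
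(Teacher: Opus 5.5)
Your proposal is correct and follows essentially the same route as the paper: the paper substitutes $u=x+y$, $t=(x-y)^2/(x+y)^2$ in a single step (your $s$ and $\tau=r^2$), obtains the same product measure $4^{-\alpha-1}u^{\alpha_0}(1-u)^\gamma t^\kappa(1-t)^\alpha\,du\,dt$, and uses Jacobi orthogonality first in $t$ and then in $u$. The norms are likewise computed as a ratio of one-variable Jacobi norms, with the $k$-dependence of the $u$-weight producing the factor $(\alpha_0+1)_{4k}/(\alpha_0+\gamma+2)_{4k}$, exactly as you propose.
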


\begin{proof}
Set
\[
u=x+y,
\qquad
t=\frac{(x-y)^2}{(x+y)^2}.
\]
Since $(x,y)\in\triangle$, we have $x-y>0$ and
\[
x=\frac{u(1+\sqrt{t})}{2},
\qquad
y=\frac{u(1-\sqrt{t})}{2},
\qquad
0<u<1,\quad 0<t<1.
\]
Moreover,
\[
xy=\frac{u^2}{4}(1-t),
\qquad
x-y=u\sqrt{t},
\qquad
dx\,dy=\frac{u}{4\sqrt{t}}\,du\,dt.
\]
Hence
\[
w_{\alpha,\gamma,\kappa}(x,y)\,dx\,dy
=
4^{-\alpha-1}
u^{2\alpha+2\kappa+2}(1-u)^\gamma
t^\kappa(1-t)^\alpha\,du\,dt .
\]
By \eqref{S_nk_agk},
\[
S_{n,k}^{\alpha,\gamma,\kappa}(x,y)
=
u^{2k}
P_n^{(\alpha_k,\gamma)}(1-2u)
P_k^{(\kappa,\alpha)}(1-2t).
\]
Thus the unnormalized inner product of
$S_{n,k}^{\alpha,\gamma,\kappa}$ and
$S_{m,l}^{\alpha,\gamma,\kappa}$ is
\[
\begin{aligned}
&4^{-\alpha-1}
\int_0^1\int_0^1
u^{2k+2l}
P_n^{(\alpha_k,\gamma)}(1-2u)
P_m^{(\alpha_l,\gamma)}(1-2u)
\\
&\qquad\qquad\qquad\times
P_k^{(\kappa,\alpha)}(1-2t)
P_l^{(\kappa,\alpha)}(1-2t)
u^{2\alpha+2\kappa+2}(1-u)^\gamma
t^\kappa(1-t)^\alpha\,dt\,du .
\end{aligned}
\]
The assumptions \(\alpha,\kappa>-1\) show that
\(t^\kappa(1-t)^\alpha\) is a Jacobi weight. Moreover,
\[
\alpha_k
=
2\alpha+2\kappa+4k+2
>
-1,
\qquad k\geq0,
\]
because \(\alpha+\kappa>-3/2\), while \(\gamma>-1\). Thus
\(u^{\alpha_k}(1-u)^\gamma\) is also a Jacobi weight.

If $k\ne l$, the integral in $t$ vanishes by the orthogonality of the
Jacobi polynomials. If $k=l$, then $\alpha_k=\alpha_l$, and the remaining
integral in $u$ is
\[
\int_0^1
P_n^{(\alpha_k,\gamma)}(1-2u)
P_m^{(\alpha_k,\gamma)}(1-2u)
u^{\alpha_k}(1-u)^\gamma\,du,
\]
which vanishes whenever $n\ne m$. Therefore the system is mutually
orthogonal.

It remains to compute $H_{n,k}^{\alpha,\gamma,\kappa}$. For
\(a,b>-1\), denote
\[
h_j^{(a,b)}
=
\int_0^1
\bigl(P_j^{(a,b)}(1-2s)\bigr)^2s^a(1-s)^b\,ds .
\]
Using \cite[(4.3.3)]{Szego75}, after the change of variables
\(x=1-2s\), we have
\[
\frac{h_j^{(a,b)}}{h_0^{(a,b)}}
=
\frac{
(a+1)_j(b+1)_j(a+b+1)
}{
j!\,(a+b+1)_j(2j+a+b+1)
},
\]
where the quotient is understood after cancelling the common factor
\(a+b+1\) before specializing the parameters. This convention also
covers \(a+b+1=0\), and the quotient equals \(1\) when \(j=0\).
For $n=m$ and $k=l$, the unnormalized inner product of
$S_{n,k}^{\alpha,\gamma,\kappa}$ with itself is
\[
4^{-\alpha-1}h_n^{(\alpha_k,\gamma)}h_k^{(\kappa,\alpha)},
\]
whereas
\[
\int_{\triangle}w_{\alpha,\gamma,\kappa}(x,y)\,dx\,dy
=
4^{-\alpha-1}h_0^{(\alpha_0,\gamma)}h_0^{(\kappa,\alpha)}.
\]
Consequently,
\[
H_{n,k}^{\alpha,\gamma,\kappa}
=
\frac{h_n^{(\alpha_k,\gamma)}h_k^{(\kappa,\alpha)}}
{h_0^{(\alpha_0,\gamma)}h_0^{(\kappa,\alpha)}}.
\]
Finally,
\[
\frac{h_0^{(\alpha_k,\gamma)}}{h_0^{(\alpha_0,\gamma)}}
=
\frac{(\alpha_0+1)_{4k}}{(\alpha_0+\gamma+2)_{4k}},
\]
since $\alpha_k=\alpha_0+4k$. Combining these identities gives the stated
formula for $H_{n,k}^{\alpha,\gamma,\kappa}$.
\end{proof}

We now record the leading term of the explicit family. The leading
coefficient of a Jacobi polynomial is
\[
P_j^{(a,b)}(\xi)
=
\frac{(j+a+b+1)_j}{2^j j!}\,\xi^j
+
\text{terms of lower degree}.
\]
Moreover,
\[
1-2\frac{(x-y)^2}{(x+y)^2}
=
-1+\frac{8xy}{(x+y)^2}.
\]
Thus, with respect to the order \(\preceq\), the leading contribution in
\eqref{S_nk_agk} is obtained by taking the leading term of each Jacobi
factor, and we have
\begin{align*}
S_{n,k}^{\alpha,\gamma,\kappa}(x,y)
&=
\frac{(-1)^n4^k
(n+2\alpha+2\kappa+4k+\gamma+3)_n
(k+\alpha+\kappa+1)_k}
{n!\,k!}
\\
&\qquad\times
(x+y)^n(xy)^k
+
\text{terms of symmetric degree lower than }(n,k).
\end{align*}
The standing parameter assumptions ensure that this leading coefficient
is nonzero.
Thus, if we set
\begin{equation*}%\label{s_nk_agk_explicit}
s_{n,k}^{\alpha,\gamma,\kappa}(x,y)
=
c_{n,k}^{\alpha,\gamma,\kappa}
S_{n,k}^{\alpha,\gamma,\kappa}(x,y),
\end{equation*}
where
\[
c_{n,k}^{\alpha,\gamma,\kappa}
=
\frac{(-1)^n n!\,k!}
{4^k
(n+2\alpha+2\kappa+4k+\gamma+3)_n
(k+\alpha+\kappa+1)_k},
\]
then
\[
s_{n,k}^{\alpha,\gamma,\kappa}(x,y)
=
(x+y)^n(xy)^k
+
\text{terms of symmetric degree lower than }(n,k).
\]

We now compare this normalized explicit family with the monic orthogonal
basis obtained by the Gram--Schmidt procedure.

\begin{proposition}%\label{prop:s_nk_equals_p_nk}
For $n,k\in\mathbb{N}_0$,
\[
s_{n,k}^{\alpha,\gamma,\kappa}(x,y)
=
p_{n,k}^{\alpha,\gamma,\kappa}(x,y).
\]
\end{proposition}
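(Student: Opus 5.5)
The plan is to use uniqueness of the monic orthogonal basis produced by Gram--Schmidt. By construction, $p_{n,k}^{\alpha,\gamma,\kappa}$ is the unique symmetric polynomial of the form $(x+y)^n(xy)^k+{}$(terms of symmetric degree $\prec(n,k)$) that is orthogonal, with respect to $w_{\alpha,\gamma,\kappa}$ on $\triangle$, to every symmetric polynomial of symmetric degree $\prec(n,k)$. Indeed, if two such polynomials existed, their difference $d$ would have degree $\prec(n,k)$ and be orthogonal to itself. Since the weight is positive in the interior of $\triangle$ and $d$ is a polynomial, this forces $d=0$, by the same argument used in the proof of Theorem~\ref{th:D1_eigen}. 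It therefore suffices to verify two properties of $s_{n,k}^{\alpha,\gamma,\kappa}$.

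The first property is monicity. The computation just before the statement shows that $s_{n,k}^{\alpha,\gamma,\kappa}=(x+y)^n(xy)^k+{}$lower terms. I would briefly justify that every other term of $S_{n,k}$ is lower. Write $S_{n,k}=u^{2k}P_n(1-2u)\,P_k(-1+8v/u^2)$, where $u=x+y$ and $v=xy$. Expanding gives monomials $u^{i}\cdot u^{2k-2j}v^j$ with $i\le n$ and $j\le k$. Each has weighted degree $i+2k\le n+2k$, and equality holds only when $i=n$. In that case the second index $j\le k$, with equality only for the leading term. So all other terms are $\prec(n,k)$.

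The second property, orthogonality to lower degrees, is the main step. By Theorem~\ref{th:S_nk_orthogonal}, the $S_{m,l}$ are mutually orthogonal and nonzero. By the monicity property applied to each $(m,l)$, the family $\{s_{m,l}:(m,l)\preceq(n,k)\}$ is unitriangular with respect to the monomials $(x+y)^m(xy)^l$. Hence every symmetric polynomial $q$ of symmetric degree $\prec(n,k)$ is a finite linear combination of the $s_{m,l}$ with $(m,l)\prec(n,k)$. This follows by a downward induction on the finite initial segment of the order $\preceq$ below $(n,k)$: subtract the leading coefficient times $s_{m,l}$ and repeat. Mutual orthogonality then gives $\int_\triangle s_{n,k}\,q\,w_{\alpha,\gamma,\kappa}=0$.

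Combining the two properties with the uniqueness yields $s_{n,k}^{\alpha,\gamma,\kappa}=p_{n,k}^{\alpha,\gamma,\kappa}$. The only delicate point I expect is the bookkeeping in the monicity check: confirming that no term of the expansion ties with $(n,k)$ in $\preceq$. The remaining steps are formal.
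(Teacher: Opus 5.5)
Your proposal is correct and is essentially the paper's argument. The paper expands $s_{n,k}^{\alpha,\gamma,\kappa}$ in the Gram--Schmidt basis and kills each lower coefficient $a_{m,l}$. It does so with the same two ingredients you use: unitriangularity of the $S_{r,s}^{\alpha,\gamma,\kappa}$ with respect to $\preceq$, and their mutual orthogonality, together with positivity of the norm. You package these as uniqueness of the monic orthogonal polynomial, and you also give a welcome explicit check that no term of $S_{n,k}^{\alpha,\gamma,\kappa}$ ties with $(n,k)$.
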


\begin{proof}
By Theorem~\ref{th:S_nk_orthogonal}, the family
$\{S_{n,k}^{\alpha,\gamma,\kappa}\}$ is mutually orthogonal with respect
to $w_{\alpha,\gamma,\kappa}$ on $\triangle$. Since
$s_{n,k}^{\alpha,\gamma,\kappa}$ is a nonzero scalar multiple of
$S_{n,k}^{\alpha,\gamma,\kappa}$, the family
$\{s_{n,k}^{\alpha,\gamma,\kappa}\}$ is mutually orthogonal as well.

Moreover, $s_{n,k}^{\alpha,\gamma,\kappa}$ has symmetric degree $(n,k)$
and leading term $(x+y)^n(xy)^k$. Hence its expansion in the orthogonal
basis obtained by Gram--Schmidt has the form
\[
s_{n,k}^{\alpha,\gamma,\kappa}
=
p_{n,k}^{\alpha,\gamma,\kappa}
+
\sum_{(m,l)\prec(n,k)}
a_{m,l}\,p_{m,l}^{\alpha,\gamma,\kappa}.
\]
Taking the inner product with $p_{m,l}^{\alpha,\gamma,\kappa}$, for
$(m,l)\prec(n,k)$, gives
\[
\resizebox{\textwidth}{!}{$ \displaystyle a_{m,l}\,
\int_{\triangle}
\bigl(p_{m,l}^{\alpha,\gamma,\kappa}(x,y)\bigr)^2
w_{\alpha,\gamma,\kappa}(x,y)\,dx\,dy
=
\int_{\triangle}
s_{n,k}^{\alpha,\gamma,\kappa}(x,y)
p_{m,l}^{\alpha,\gamma,\kappa}(x,y)
w_{\alpha,\gamma,\kappa}(x,y)\,dx\,dy .$}
\]
The integral on the right-hand side is zero by
Theorem~\ref{th:S_nk_orthogonal}, since
$s_{n,k}^{\alpha,\gamma,\kappa}$ is a scalar multiple of
$S_{n,k}^{\alpha,\gamma,\kappa}$ and
$p_{m,l}^{\alpha,\gamma,\kappa}$ is a linear combination of
$S_{r,s}^{\alpha,\gamma,\kappa}$ with $(r,s)\preceq(m,l)\prec(n,k)$.
Since $\alpha,\gamma,\kappa>-1$ and $\alpha+\kappa>-\frac32$, the norm multiplying
$a_{m,l}$ is finite and strictly positive. Therefore \(a_{m,l}=0\).
Thus
\[
s_{n,k}^{\alpha,\gamma,\kappa}
=
p_{n,k}^{\alpha,\gamma,\kappa},
\]
as claimed.
\end{proof}

%------------------------------------------------------------------------------
\section{A change of variables}\label{sec:uv}
%------------------------------------------------------------------------------

The change of variables introduced in this section rewrites the symmetric
problem on the chamber as an ordinary bivariate orthogonality problem on a
planar domain. This formulation will be used to express the differential
operators in coordinates adapted to the symmetric structure.

Consider the change of variables
\[
u=x+y,\qquad v=xy.
\]
This map sends the interior of \(\triangle\) bijectively onto the region
\(\Omega\) shown in Figure~\ref{fig:Omega-region}, where
\[
\Omega
=
\left\{(u,v)\in\mathbb{R}^2:\ 0< u< 1,\ 0< v< \frac{u^2}{4}\right\}.
\]

\begin{figure}[t]
	\centering
	\begin{tikzpicture}[scale=5]
		
		% Ventana
		\def\umin{-0.08} \def\umax{1.18}
		\def\vmin{-0.05} \def\vmax{0.34}
		
		% Ejes
		\draw[->,black,line width=0.6pt] (\umin,0) -- (\umax,0) node[below right] {$u$};
		\draw[->,black,line width=0.6pt] (0,\vmin) -- (0,\vmax) node[above left] {$v$};
		
		% Marcas y etiquetas
		\foreach \x in {1}{
			\draw[black,line width=0.5pt] (\x,0.008)--(\x,-0.008);
			\node[below=4pt] at (\x,0) {\small $\x$};
		}
		\foreach \y/\ylabel in {0.25/{\frac14}}{
			\draw[black,line width=0.5pt] (0.008,\y)--(-0.008,\y);
			\node[left=4pt] at (0,\y) {\small $\ylabel$};
		}
		
		% Región Ω = {(u,v): 0 <= u <= 1, 0 <= v <= u^2/4}
		\begin{scope}
			\clip (\umin,\vmin) rectangle (\umax,\vmax);
			\fill[blue!35,fill opacity=0.35]
			(0,0)
			-- plot[domain=0:1,samples=250] ({\x},{(\x*\x)/4})
			-- (1,0)
			-- cycle;
		\end{scope}
		
		% Bordes
		\draw[line width=1.3pt,blue!70,domain=0:1,samples=250]
		plot ({\x},{(\x*\x)/4});
		\draw[line width=1.3pt,blue!70] (0,0) -- (1,0);
		\draw[line width=1.3pt,blue!70] (1,0) -- (1,0.25);
		
		% Puntos destacados
		%\fill[blue!70] (0,0) circle (0.6pt);
		%\fill[blue!70] (1,0) circle (0.6pt);
		%\fill[blue!70] (1,0.25) circle (0.6pt);
		
	\end{tikzpicture}

	\caption{ $\Omega=\left\{(u,v)\in\mathbb{R}^2:\ 0< u< 1,\ 0< v< \dfrac{u^2}{4}\right\}$.}
	\label{fig:Omega-region}
\end{figure}
Since the Jacobian of the change of variables is $|x-y|
=
\sqrt{u^2-4v}$, 
we have
\[
\int_{\Omega} f(u,v)\,
\mathcal{W}_{\alpha,\gamma,\kappa}(u,v)\,du\,dv
=
\int_{\triangle}
f(x+y,xy)\,
w_{\alpha,\gamma,\kappa}(x,y)\,dx\,dy,
\]
where
\[
\mathcal{W}_{\alpha,\gamma,\kappa}(u,v)
=
v^\alpha(1-u)^\gamma(u^2-4v)^\kappa,
\qquad
(u,v)\in\Omega.
\]

Every polynomial $q(u,v)$ can be written as a finite linear combination of
the monomials
\[
\{ u^n v^k,\quad n,k\in\mathbb{N}_0\}.
\]
We order these monomials using the order $\preceq$.
Thus, if
\[
q(u,v)
=
c_{n,k}u^n v^k
+
\sum_{(m,l)\prec(n,k)}
c_{m,l}u^m v^l,
\qquad c_{n,k}\ne0,
\]
then we say that $q(u,v)$ has degree $(n,k)$. Notice that the weighted
degree associated with the leading monomial $u^nv^k$ is $n+2k$.

Finally, if $q(u,v)$ has degree $(n,k)$, then $q(x+y,xy)$ defines a symmetric polynomial in $x$ and $y$ of symmetric degree $(n,k)$.
Moreover, the coefficient of $(x+y)^n(xy)^k$ in $q(x+y,xy)$ coincides
with the coefficient of $u^nv^k$ in $q(u,v)$.

Hence any symmetric polynomial in $x$ and $y$ can be written uniquely in
terms of $u=x+y$ and $v=xy$. For the orthogonal basis $\{p_{n,k}^{\alpha,\gamma,\kappa}(x,y):\ n,k\in\mathbb{N}_0\}$, define
\[
q_{n,k}^{\alpha,\gamma,\kappa}(x+y,xy)
:=
p_{n,k}^{\alpha,\gamma,\kappa}(x,y).
\]
Then
\[
\resizebox{\textwidth}{!}{$\displaystyle \int_{\Omega}
q_{n,k}^{\alpha,\gamma,\kappa}(u,v)
q_{m,l}^{\alpha,\gamma,\kappa}(u,v)
\mathcal{W}_{\alpha,\gamma,\kappa}(u,v)dudv
=
\int_{\triangle}
p_{n,k}^{\alpha,\gamma,\kappa}(x,y)
p_{m,l}^{\alpha,\gamma,\kappa}(x,y)
w_{\alpha,\gamma,\kappa}(x,y)dxdy,$}
\]
and therefore we say that
\[
\{q_{n,k}^{\alpha,\gamma,\kappa}(u,v):\ n,k\in\mathbb{N}_0\}
\]
is an orthogonal polynomial system with respect to
$\mathcal{W}_{\alpha,\gamma,\kappa}$ on $\Omega$.

Under the change of variables,
\[
\partial_x=\partial_u+y\,\partial_v,
\qquad
\partial_y=\partial_u+x\,\partial_v.
\]
Thus
\begin{align*}
\partial_x^2
&=
\partial_u^2+2y\,\partial_u\partial_v+y^2\partial_v^2,\\[5pt]
\partial_x\partial_y
&=
\partial_u^2+u\,\partial_u\partial_v+v\,\partial_v^2+\partial_v,\\[5pt]
\partial_y^2
&=
\partial_u^2+2x\,\partial_u\partial_v+x^2\partial_v^2.
\end{align*}
Consequently, in $(u,v)$-coordinates, the operator
$\mathcal{D}_{1}^{\alpha,\gamma,\kappa}$ becomes
\[
\begin{aligned}
\mathcal{D}_{1}^{\alpha,\gamma,\kappa}
&=
u(1-u)\partial_u^2
+
4v(1-u)\partial_u\partial_v
+
v(u-4v)\partial_v^2
\\
&\quad
+
\bigl[2\alpha+2\kappa+3
-
(2\alpha+2\kappa+\gamma+4)u\bigr]\partial_u
\\
&\quad
+
\bigl[(\alpha+1)u
-
2(2\alpha+2\kappa+\gamma+5)v\bigr]\partial_v .
\end{aligned}
\]
Therefore
\[
\mathcal{D}_{1}^{\alpha,\gamma,\kappa}
\big[q_{n,k}^{\alpha,\gamma,\kappa}(u,v)\big]
=
\lambda_{n,k}^{\alpha,\gamma,\kappa}
q_{n,k}^{\alpha,\gamma,\kappa}(u,v),
\]
where
\[
\lambda_{n,k}^{\alpha,\gamma,\kappa}
=
-(n+2k)(n+2k+2\alpha+2\kappa+\gamma+3).
\]

The operators $D_{-}$ and $D_{+}^{\alpha,\kappa}$ take the form
\[
D_{-}=\partial_v
\]
and
\[
D_{+}^{\alpha,\kappa}
=
-v(u^2-4v)\partial_v
-(\alpha+1)u^2
+
4(\alpha+\kappa+2)v.
\]
Hence Theorem~\ref{th:ladderD_agk} gives
\[
D_{-}
\big[q_{n,k}^{\alpha,\gamma,\kappa}(u,v)\big]
=
k\,q_{n,k-1}^{\alpha+1,\gamma,\kappa+1}(u,v),
\]
with the usual convention that the right-hand side is zero when $k=0$,
and
\[
D_{+}^{\alpha,\kappa}
\big[q_{n,k}^{\alpha+1,\gamma,\kappa+1}(u,v)\big]
=
4(k+\alpha+\kappa+2)\,
q_{n,k+1}^{\alpha,\gamma,\kappa}(u,v).
\]

Finally, the operator
\[
\mathcal{D}_{2}^{\alpha,\kappa}
=
D_{+}^{\alpha,\kappa}D_{-}
\]
is given in $(u,v)$-coordinates by
\[
\mathcal D_2^{\alpha,\kappa}
=
-v(u^2-4v)\partial_v^2
+
\left[-(\alpha+1)u^2+4(\alpha+\kappa+2)v\right]\partial_v,
\]
and satisfies
\[
\mathcal{D}_{2}^{\alpha,\kappa}
\big[q_{n,k}^{\alpha,\gamma,\kappa}(u,v)\big]
=
4k(k+\alpha+\kappa+1)
q_{n,k}^{\alpha,\gamma,\kappa}(u,v).
\]

Thus the variables \(u=x+y\) and \(v=xy\) provide the coordinate system in
which the orthogonality, the spectral equations, and the operator algebra
can be studied as a two-variable polynomial system.

%------------------------------------------------------------------------------
\section{The algebra of differential operators}\label{sec:algebra}
%------------------------------------------------------------------------------

We have established that the transformed polynomials
\(\{q_{n,k}^{\alpha,\gamma,\kappa}(u,v)\}\) are common eigenfunctions of
the second-order operators
\(\mathcal D_1^{\alpha,\gamma,\kappa}\) and
\(\mathcal D_2^{\alpha,\kappa}\). Let \(\mathfrak D_{\alpha,\gamma,\kappa}\) denote the algebra of linear
differential operators with real polynomial coefficients in \(u\) and \(v\)
such that every \(q_{n,k}^{\alpha,\gamma,\kappa}\) is an eigenfunction.
We now determine this algebra.

From Theorem~\ref{th:D1_eigen} and Corollary~\ref{coro:D2_agk}, the
eigenvalues of the two basic operators are
\begin{equation}\label{eq:eigD2_agk}
\begin{aligned}
\mathcal{D}_{1}^{\alpha,\gamma,\kappa}
\big[q_{n,k}^{\alpha,\gamma,\kappa}(u,v)\big]
&=
\lambda_{n,k}^{\alpha,\gamma,\kappa}
q_{n,k}^{\alpha,\gamma,\kappa}(u,v),
%\label{eq:eigD1_agk}
\\[4pt]
\mathcal{D}_{2}^{\alpha,\kappa}
\big[q_{n,k}^{\alpha,\gamma,\kappa}(u,v)\big]
&=
\mu_{n,k}^{\alpha,\kappa}
q_{n,k}^{\alpha,\gamma,\kappa}(u,v),
\end{aligned}
\end{equation}
where
\[
\lambda_{n,k}^{\alpha,\gamma,\kappa}
=
-(n+2k)(n+2k+2\alpha+2\kappa+\gamma+3),
\]
and
\[
\mu_{n,k}^{\alpha,\kappa}
=
4k(k+\alpha+\kappa+1).
\]
Thus the first eigenvalue depends on the quantity \(n+2k\), whereas the
second one depends only on \(k\). 

Let \(\mathcal{D}\in\mathfrak{D}_{\alpha,\gamma,\kappa}\). We write $\mathcal D$ as a finite sum
\[
\mathcal D
=
\sum_{r,s\geq0}
a_{r,s}(u,v)\,\partial_u^r\partial_v^s,
\]
for suitable polynomials \(a_{r,s}(u,v)\). Since \(\mathcal{D}\) belongs to
\(\mathfrak{D}_{\alpha,\gamma,\kappa}\), there exists a function
\(\Lambda_{\mathcal{D}}(n,k)\) such that
\[
\mathcal{D}
\big[q_{n,k}^{\alpha,\gamma,\kappa}(u,v)\big]
=
\Lambda_{\mathcal{D}}(n,k)\,
q_{n,k}^{\alpha,\gamma,\kappa}(u,v).
\]
It follows from \cite[Lemma~7.1]{AMP25} that the coefficient
$a_{r,s}(u,v)$ cannot have degree larger than \((r,s)\). The same result
also implies that an operator in \(\mathfrak{D}_{\alpha,\gamma,\kappa}\)
is completely determined by its eigenvalue sequence on
\(\{q_{n,k}^{\alpha,\gamma,\kappa}\}\). Hence the algebra
\(\mathfrak{D}_{\alpha,\gamma,\kappa}\) is commutative; see
\cite[Corollary~7.2]{AMP25}.

We return to the separating variable introduced in
Section~\ref{sec:explicit-jacobi},
\[
t=1-\frac{4v}{u^2}
=\frac{(x-y)^2}{(x+y)^2}.
\]
The following proposition describes the structure of the highest-order terms.

\begin{proposition}\label{lem:highest_order_generated}
Let \(\mathcal D\in\mathfrak D_{\alpha,\gamma,\kappa}\) be a nonzero
differential operator. Then \(\mathcal D\) has even order \(2m\).
Moreover, in the variables \((u,t)\), the operator has an expansion of the
form
\begin{equation}\label{eq:highest-order-expansion-generated}
\mathcal D
=
\sum_{\ell=0}^{m}
c_\ell\,
\left(\mathcal D_{1}^{\alpha,\gamma,\kappa}\right)^{m-\ell}
\left(\mathcal D_{2}^{\alpha,\kappa}\right)^{\ell}
+
\textup{terms of order lower than }2m
\end{equation}
with constant coefficients \(c_\ell\).
\end{proposition}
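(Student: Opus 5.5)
The plan is to work at the level of principal symbols and use the commutativity of \(\mathfrak D_{\alpha,\gamma,\kappa}\) recorded just above (via \cite[Corollary~7.2]{AMP25}). Let \(\mathcal D\in\mathfrak D_{\alpha,\gamma,\kappa}\) have exact order \(N\), and let \(\sigma(u,t;\xi,\eta)\) denote its principal symbol in the coordinates \((u,t)\), where \(t=1-4v/u^2\). This symbol is a homogeneous polynomial of degree \(N\) in \((\xi,\eta)\), with coefficients that are rational in \(u\) and polynomial in \(t\).

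\emph{Step 1: principal symbols of the generators.} From the \((u,v)\)-expressions of Section~\ref{sec:uv} and \(\partial_v=-\tfrac{4}{u^2}\partial_t\), I will compute
\[
\sigma_2=-4t(1-t)\eta^2,
\qquad
\sigma_1=u(1-u)\xi^2+\frac{4t(1-t)}{u}\,\eta^2\cdot(\text{explicit factor}).
\]
The mixed term \(4v(1-u)\partial_u\partial_v\) should cancel against the \(\partial_u\)-part of the chain rule. Concretely, I expect \(\sigma_1\) to separate as \(u(1-u)\xi^2-\tfrac{1}{u^2}\,\sigma_2\) up to normalisation. This is consistent with the separated form \(q_{n,k}=u^{2k}P_n(1-2u)P_k(1-2t)\) of Section~\ref{sec:explicit-jacobi}.

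\emph{Step 2: Poisson-commutation equations.} Because \(\mathfrak D_{\alpha,\gamma,\kappa}\) is commutative, \([\mathcal D,\mathcal D_2^{\alpha,\kappa}]=0\) and \([\mathcal D,\mathcal D_1^{\alpha,\gamma,\kappa}]=0\). Taking top-order parts gives the Poisson bracket conditions
\[
\{\sigma,\sigma_2\}=0,\qquad \{\sigma,\sigma_1\}=0.
\]
\begin{itemize}
\item The first condition is a first-order transport equation in the \((t,\eta)\) variables along the Hamiltonian flow of \(t(1-t)\eta^2\). Its polynomial-in-\(\eta\) solutions should be generated by \(\sigma_2\) itself, with coefficients depending on \((u,\xi)\).
\item Inserting this into the second condition, and using that \(\sigma_1-u(1-u)\xi^2\) is a function of \(u\) times \(\sigma_2\), reduces it to a one-dimensional transport problem in \((u,\xi)\). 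This forces the coefficients to be polynomials in \(u(1-u)\xi^2\) and \(\sigma_2/u^2\), hence in \(\sigma_1\) and \(\sigma_2\).
\end{itemize}

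\emph{Step 3: conclusion.} Homogeneity of degree \(N\) in \((\xi,\eta)\), with \(\sigma_1,\sigma_2\) both quadratic, forces \(N=2m\) and
\[
\sigma=\sum_{\ell=0}^m c_\ell\,\sigma_1^{m-\ell}\sigma_2^{\ell}
\]
with constant \(c_\ell\). The polynomial degree restriction from \cite[Lemma~7.1]{AMP25} (\(\deg a_{r,s}\preceq(r,s)\)) is what I will use to rule out non-constant first integrals. For instance, in \((u,v)\) coordinates \(u\)-dependent coefficients would violate the degree bound. Since the principal symbol of \(\bigl(\mathcal D_1^{\alpha,\gamma,\kappa}\bigr)^{m-\ell}\bigl(\mathcal D_2^{\alpha,\kappa}\bigr)^\ell\) is \(\sigma_1^{m-\ell}\sigma_2^\ell\), subtracting \(\sum_\ell c_\ell(\mathcal D_1^{\alpha,\gamma,\kappa})^{m-\ell}(\mathcal D_2^{\alpha,\kappa})^\ell\) from \(\mathcal D\) leaves an operator of order lower than \(2m\). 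This is \eqref{eq:highest-order-expansion-generated}.

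\emph{Main obstacle.} I expect the hard step to be Step 2: classifying polynomial symbols Poisson-commuting with both \(\sigma_1\) and \(\sigma_2\), in particular excluding exotic first integrals such as odd powers or square roots of \(\sigma_1,\sigma_2\), or functions of the flow variables. If the Poisson-bracket analysis proves unwieldy, the fallback is a spectral argument. I would show from the triangular action on \(u^nv^k\) that \(\Lambda_{\mathcal D}(n,k)\) is a polynomial of degree \(\le N\) whose top homogeneous part is read off from the coefficients of \(u^rv^s\) in \(a_{r,s}\), \(r+s=N\). I would then test \(\mathcal D\) against the explicit \(q_{n,k}\) in the \((u,t)\)-separated form to show that this top part is a polynomial in \((n+2k)^2\) and \(k^2\), i.e. in the top parts of \(\lambda_{n,k}\) and \(\mu_{n,k}\). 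That forces \(N\) even and identifies the \(c_\ell\).
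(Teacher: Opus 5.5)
\textbf{Verdict: genuine gap.} Your framework matches the paper's: comparing the order-$(N+1)$ terms of $[\mathcal D,\mathcal D_2^{\alpha,\kappa}]=0$ and $[\mathcal D,\mathcal D_1^{\alpha,\gamma,\kappa}]=0$ in $(u,t)$ is exactly your Poisson-bracket condition. Step~1 is right except for the power of $u$: in fact $\sigma_1=u(1-u)\xi^2-\sigma_2/u$. The problem is that Step~2, which you flag as the hard part, is the whole content of the proposition, and you state its outcome instead of deriving it. The mechanisms you name for the two delicate points are also not the ones that work.

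\emph{Exclusion of odd powers.} $\{\sigma,\sigma_2\}=0$ has solutions that are polynomial in $(\xi,\eta)$ but do not lie in $\mathbb R[\sigma_1,\sigma_2]$; for example $\sqrt{-\sigma_2}=2\sqrt{t(1-t)}\,\eta$, which commutes with $\sigma_1$ as well. These are excluded not by the degree bound of \cite[Lemma~7.1]{AMP25}, but by rationality: the coefficients of $\mathcal D$ are polynomials in $(u,v)$, hence rational in $(u,t)$. Coefficientwise, $\{\sigma,\sigma_2\}=0$ reads $s(1-2t)b_{r,s}=2t(1-t)\partial_t b_{r,s}$. This gives $b_{r,s}=d_{r,s}(u)[t(1-t)]^{s/2}$, and rationality kills every odd $s$.

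\emph{Evenness of the order.} Your Step~3 derives $N=2m$ from homogeneity, but that presupposes $\sigma\in\mathbb R[\sigma_1,\sigma_2]$, which is what must be proved, so the argument is circular. The actual parity argument uses $\{\sigma,\sigma_1\}=0$, which yields
$\bigl(d_j/[u(1-u)]^{N/2-j}\bigr)'=(N/2-j+1)u^{-2}\,d_{j-1}/[u(1-u)]^{N/2-j+1}$.
For odd $N$ this gives inductively $d_j=C_j[u(1-u)]^{N/2-j}$ with half-integer exponents, and rationality forces every $C_j=0$.

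\emph{The intermediate claim in Step~2 is false.} The symbols commuting with $\sigma_1$ and $\sigma_2$ are not the polynomials in $A=u(1-u)\xi^2$ and $\sigma_2/u$ (still less $\sigma_2/u^2$). Neither $A$ nor $\sigma_2/u$ commutes with $\sigma_1$: up to sign, $\{\sigma_2/u,A\}=2(1-u)u^{-1}\xi\,\sigma_2\neq0$. Moreover $A=\sigma_1+\sigma_2/u$ is not a constant-coefficient polynomial in $\sigma_1,\sigma_2$, so your ``hence'' does not follow.

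\emph{What is actually needed.} For $N=2m$, set $F_j=d_j/[u(1-u)]^{m-j}$ and solve the triangular system $F_j'=(m-j+1)u^{-2}F_{j-1}$; each step introduces exactly one additive constant. Then check that the order-$2m$ coefficients $p_{j,q}(u)=(-1)^{j-q}\binom{m-q}{j-q}u^{-(j-q)}$ of $(\mathcal D_1^{\alpha,\gamma,\kappa})^{m-q}(\mathcal D_2^{\alpha,\kappa})^{q}$ satisfy the same recurrence, with $p_{q,q}=1$ and $p_{j,q}=0$ for $j<q$. The $c_q$ are then produced inductively, and their constancy comes from this recurrence, not from the degree bound.

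\emph{The spectral fallback does not close the gap.} You give no mechanism for showing that the top part of $\Lambda_{\mathcal D}(n,k)$ is a polynomial in $(n+2k)^2$ and $k^2$. In any case it controls eigenvalues rather than order. You would still need an argument that a vanishing degree-$N$ part of $\Lambda_{\mathcal D}$ forces the order-$N$ terms of the operator to vanish.
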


\begin{proof}
Let \(N\) be the order of \(\mathcal D\), and write
\[
\mathcal D
=
\sum_{r+s=N}
b_{r,s}(u,t)\,
\partial_u^{\,r}\partial_t^{\,s}
+
\textup{terms of lower order}.
\]
Since the operators in \(\mathfrak D_{\alpha,\gamma,\kappa}\) have the
same polynomial eigenbasis, \(\mathcal D\) commutes with
\(\mathcal D_1^{\alpha,\gamma,\kappa}\) and
\(\mathcal D_2^{\alpha,\kappa}\). We first determine the terms of highest
order in \(\mathcal D\) from these two commutation relations.

In the variables \((u,t)\), the order-two parts of the two basic
operators are
\begin{align*}
\mathcal D_2^{\alpha,\kappa}
&=
-4t(1-t)\partial_t^2
+
\textup{terms of lower order},\\
\mathcal D_1^{\alpha,\gamma,\kappa}
&=
u(1-u)\partial_u^2
+
\frac{4t(1-t)}{u}\partial_t^2
+
\textup{terms of lower order}.
\end{align*}
Comparing the terms of order \(N+1\) in
\[
[\mathcal D,\mathcal D_2^{\alpha,\kappa}]=0
\]
gives, for \(r+s=N\),
\[
s(1-2t)b_{r,s}(u,t)
-
2t(1-t)\partial_t b_{r,s}(u,t)
=
0.
\]
Hence
\[
b_{r,s}(u,t)
=
d_{r,s}(u)[t(1-t)]^{s/2}.
\]
Since the coefficients of \(\mathcal D\), written in the variables
\((u,t)\), are rational functions of \(u\) and \(t\), the terms with
\(s\) odd must vanish. Thus the terms of order \(N\) in \(\mathcal D\)
can be written as
\begin{equation}\label{eq:D-highest-order-N}
\sum_{j=0}^{\lfloor N/2\rfloor}
d_j(u)
\left(-4t(1-t)\partial_t^2\right)^j
\partial_u^{\,N-2j}.
\end{equation}

The second commutation relation determines the dependence of the
coefficients \(d_j\) on \(u\). Comparing the terms of order \(N+1\) in
\[
[\mathcal D,\mathcal D_1^{\alpha,\gamma,\kappa}]=0
\]
gives
\begin{equation}\label{eq:highest-order-recurrence-compact-N}
\left(
\frac{d_j(u)}
{[u(1-u)]^{\frac{N}{2}-j}}
\right)'
=
\left(\frac{N}{2}-j+1\right)u^{-2}
\frac{d_{j-1}(u)}
{[u(1-u)]^{\frac{N}{2}-j+1}},
\quad
0\leq j\leq \left\lfloor N/2\right\rfloor,
\end{equation}
with the convention \(d_{-1}=0\).

The recurrence also forces \(N\) to be even. For \(j=0\),
\eqref{eq:highest-order-recurrence-compact-N} gives
\[
d_0(u)=C_0[u(1-u)]^{N/2}.
\]
Suppose, to the contrary, that \(N\) is odd. Since \(d_0(u)\) is
rational, the preceding identity forces \(C_0=0\), and hence
\(d_0(u)=0\). Assume inductively that
\[
d_0(u)=\cdots=d_{k-1}(u)=0.
\]
Then the right-hand side of
\eqref{eq:highest-order-recurrence-compact-N} vanishes for \(j=k\), and
therefore
\[
d_k(u)=C_k[u(1-u)]^{N/2-k}.
\]
Since \(N/2-k\) is not an integer, the rationality of \(d_k(u)\) forces
\(C_k=0\). Hence all terms of order \(N\) vanish, contradicting the
definition of \(N\). Thus \(N=2m\).

Set
\[
F_j(u)
=
\frac{d_j(u)}{[u(1-u)]^{m-j}},
\qquad 0\leq j\leq m.
\]
Then \eqref{eq:highest-order-recurrence-compact-N} becomes
\begin{equation}\label{eq:Fj_recurrence}
F_j'(u)
=
(m-j+1)u^{-2}F_{j-1}(u),
\qquad F_{-1}=0.
\end{equation}

It remains to express the functions \(F_j\) as a constant linear
combination of the coefficient families arising from the terms of order
\(2m\) in
\[
\left(\mathcal D_1^{\alpha,\gamma,\kappa}\right)^{m-q}
\left(\mathcal D_2^{\alpha,\kappa}\right)^q,
\qquad 0\leq q\leq m.
\]
For fixed \(q\), these terms can be written as
\begin{equation}\label{eq:Dq-highest-order-2m}
\sum_{j=0}^{m}
p_{j,q}(u)
[u(1-u)]^{m-j}
\left(-4t(1-t)\partial_t^2\right)^j
\partial_u^{\,2m-2j},
\end{equation}
where
\[
p_{j,q}(u)
=
\begin{cases}
0, & j<q,\\[4pt]
(-1)^{j-q}
\binom{m-q}{j-q}
u^{-(j-q)}, & j\ge q.
\end{cases}
\]
The functions \(p_{j,q}\) satisfy
\begin{equation}\label{eq:pjq-recurrence}
p_{j,q}'(u)
=
(m-j+1)u^{-2}p_{j-1,q}(u),
\end{equation}
with the convention \(p_{-1,q}=0\). Indeed, if \(j<q\), then both sides
of \eqref{eq:pjq-recurrence} vanish. If \(j=q\), then \(p_{q,q}=1\) and
\(p_{q-1,q}=0\), so the identity also holds. Finally, if \(j>q\), then
\[
\begin{aligned}
p_{j,q}'(u)
&=
(-1)^{j-q+1}(j-q)
\binom{m-q}{j-q}
u^{-(j-q+1)}
\\
&=
(-1)^{j-q-1}(m-j+1)
\binom{m-q}{j-q-1}
u^{-(j-q+1)}
\\
&=
(m-j+1)u^{-2}p_{j-1,q}(u),
\end{aligned}
\]
where the middle equality uses
\[
(j-q)\binom{m-q}{j-q}
=
(m-j+1)\binom{m-q}{j-q-1}.
\]
Thus, for each \(q\), the functions \(p_{j,q}\) satisfy the same
recurrence as the functions \(F_j\), with \(p_{q,q}=1\) and
\(p_{j,q}=0\) for \(j<q\).

By \eqref{eq:D-highest-order-N}, with \(N=2m\), and
\eqref{eq:Dq-highest-order-2m}, equality of the terms of order \(2m\) is
equivalent to
\[
d_j(u)
=
[u(1-u)]^{m-j}
\sum_{q=0}^{j}c_qp_{j,q}(u),
\qquad 0\le j\le m,
\]
where the sum terminates at \(q=j\) because \(p_{j,q}=0\) for \(q>j\).
In terms of \(F_j\), this becomes the triangular system
\[
F_j(u)
=
\sum_{q=0}^{j}c_qp_{j,q}(u),
\qquad 0\le j\le m.
\]

We solve this system by induction on \(j\). Since \(F_0'=0\) and
\(p_{0,0}=1\), set
\[
c_0=F_0.
\]
Assume that \(c_0,\ldots,c_{j-1}\) have been chosen so that
\[
F_k(u)
=
\sum_{q=0}^{k}c_qp_{k,q}(u),
\qquad 0\le k<j.
\]
Define
\[
H_j(u)
=
F_j(u)
-
\sum_{q=0}^{j-1}c_qp_{j,q}(u).
\]
Applying \eqref{eq:Fj_recurrence} and \eqref{eq:pjq-recurrence} gives
\[
H_j'(u)
=
(m-j+1)u^{-2}
\left(
F_{j-1}(u)
-
\sum_{q=0}^{j-1}c_qp_{j-1,q}(u)
\right).
\]
The expression in parentheses vanishes by the induction hypothesis.
Hence \(H_j\) is constant, and we set
\[
c_j=H_j.
\]
Since \(p_{j,j}=1\), this gives the required identity at index \(j\) and
completes the induction.

Multiplying the resulting identities by \([u(1-u)]^{m-j}\), the
coefficients of the terms of order \(2m\) agree for every
\(0\leq j\leq m\). Therefore
\[
\mathcal D
-
\sum_{q=0}^{m}
c_q
\left(\mathcal D_1^{\alpha,\gamma,\kappa}\right)^{m-q}
\left(\mathcal D_2^{\alpha,\kappa}\right)^q
\]
has order lower than \(2m\).
\end{proof}

We now identify the generators of the algebra.

\begin{theorem}\label{thm:algebra_agk}
Any linear partial differential operator with real polynomial coefficients in
$u$ and $v$ that admits the polynomials
\[
\{q_{n,k}^{\alpha,\gamma,\kappa}(u,v):\ n,k\in\mathbb{N}_0\}
\]
as eigenfunctions can be written uniquely as a polynomial in
$\mathcal{D}_{1}^{\alpha,\gamma,\kappa}$ and
$\mathcal{D}_{2}^{\alpha,\kappa}$.
\end{theorem}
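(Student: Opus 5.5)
The proof goes by strong induction on the order of \(\mathcal D\in\mathfrak D_{\alpha,\gamma,\kappa}\), using Proposition~\ref{lem:highest_order_generated} to peel off the top-order part. Existence and uniqueness are handled separately.

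\emph{Existence.} If \(\mathcal D\) has order \(0\), it is multiplication by a polynomial \(a_{0,0}(u,v)\). By \cite[Lemma~7.1]{AMP25}, \(a_{0,0}\) has degree at most \((0,0)\), so it is a constant. Now let \(\mathcal D\) be nonzero of order \(N\geq1\). By Proposition~\ref{lem:highest_order_generated}, \(N=2m\), and there are constants \(c_0,\dots,c_m\) such that
\[
\mathcal D'=\mathcal D-\sum_{\ell=0}^m c_\ell\bigl(\mathcal D_1^{\alpha,\gamma,\kappa}\bigr)^{m-\ell}\bigl(\mathcal D_2^{\alpha,\kappa}\bigr)^{\ell}
\]
has order lower than \(2m\). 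Each product of \(\mathcal D_1^{\alpha,\gamma,\kappa}\) and \(\mathcal D_2^{\alpha,\kappa}\) has polynomial coefficients in \((u,v)\), as the explicit formulas of Section~\ref{sec:uv} show. Each product also has every \(q_{n,k}^{\alpha,\gamma,\kappa}\) as an eigenfunction, with eigenvalue \((\lambda_{n,k}^{\alpha,\gamma,\kappa})^{m-\ell}(\mu_{n,k}^{\alpha,\kappa})^\ell\). Therefore \(\mathcal D'\in\mathfrak D_{\alpha,\gamma,\kappa}\) and has strictly smaller order. By the induction hypothesis, \(\mathcal D'\) is a polynomial in \(\mathcal D_1^{\alpha,\gamma,\kappa}\) and \(\mathcal D_2^{\alpha,\kappa}\), and hence so is \(\mathcal D\).

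One subtlety needs checking. Proposition~\ref{lem:highest_order_generated} expresses the top-order part in the variables \((u,t)\), whereas membership in \(\mathfrak D_{\alpha,\gamma,\kappa}\) refers to polynomial coefficients in \((u,v)\). This causes no problem, because \(\mathcal D'\) is literally a difference of operators with polynomial \((u,v)\)-coefficients. Only its order is read off in the \((u,t)\) chart, and the order of an operator is invariant under the change of variables, which is a diffeomorphism on the open set \(u>0\).

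\emph{Uniqueness.} Suppose \(R(\mathcal D_1^{\alpha,\gamma,\kappa},\mathcal D_2^{\alpha,\kappa})=0\) for some real polynomial \(R(X,Y)\). Applying this operator to \(q_{n,k}^{\alpha,\gamma,\kappa}\) gives
\[
R\bigl(\lambda_{n,k}^{\alpha,\gamma,\kappa},\mu_{n,k}^{\alpha,\kappa}\bigr)=0\quad\text{for all } n,k\in\mathbb N_0.
\]
Set \(N=n+2k\). Then \(\lambda=-N(N+c)\) with \(c=2\alpha+2\kappa+\gamma+3\), and \(\mu=4k(k+\alpha+\kappa+1)\). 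Fix \(k\) and let \(N\) range over all integers \(N\geq2k\). Since \(\lambda\) takes infinitely many distinct values, the polynomial \(X\mapsto R(X,\mu_k)\) vanishes identically. Next, \(\mu_k\) takes infinitely many distinct values as \(k\) varies, because \(\mu_k\) is a nonconstant quadratic in \(k\). Hence each coefficient of \(R\), viewed as a polynomial in \(Y\) after expanding in powers of \(X\), has infinitely many zeros. Therefore \(R=0\), and so the representation is unique. This also shows that \(\mathcal D_1^{\alpha,\gamma,\kappa}\) and \(\mathcal D_2^{\alpha,\kappa}\) are algebraically independent, so \(\mathfrak D_{\alpha,\gamma,\kappa}\cong\mathbb R[X,Y]\).

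The main obstacle is the existence step. All the hard work there was done in Proposition~\ref{lem:highest_order_generated}, so what remains is to check that the subtraction stays inside \(\mathfrak D_{\alpha,\gamma,\kappa}\) and genuinely lowers the order. One must also handle the base case \(m=0\) of the induction via \cite[Lemma~7.1]{AMP25}.
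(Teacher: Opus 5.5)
Your proof is correct. The existence half is the paper's argument in a different form. The paper uses a minimal-order counterexample instead of strong induction, but the key step is identical: subtract $\sum_{\ell}c_\ell(\mathcal D_1^{\alpha,\gamma,\kappa})^{m-\ell}(\mathcal D_2^{\alpha,\kappa})^{\ell}$ supplied by Proposition~\ref{lem:highest_order_generated}, and observe that the remainder stays in $\mathfrak D_{\alpha,\gamma,\kappa}$ with smaller order. Your separate base case via \cite[Lemma~7.1]{AMP25} is harmless but unnecessary: the proposition already covers $m=0$, or you can simply apply a zeroth-order operator to $q_{0,0}^{\alpha,\gamma,\kappa}=1$.

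Uniqueness is where you genuinely differ from the paper. The paper works with principal parts. For $P$ of total degree $r$, it uses the top-order formula \eqref{eq:Dq-highest-order-2m} and the triangularity $p_{j,q}=0$ for $j<q$, $p_{q,q}=1$. This isolates the nonzero coefficient $a_{q_0}[u(1-u)]^{r-q_0}$ of $\bigl(-4t(1-t)\partial_t^2\bigr)^{q_0}\partial_u^{2r-2q_0}$. You instead evaluate on the eigenbasis and use that the joint spectrum $\{(\lambda_{n,k}^{\alpha,\gamma,\kappa},\mu_{n,k}^{\alpha,\kappa})\}$ is Zariski dense in $\mathbb R^2$. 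Your two-step root-counting argument for this density is correct: for each fixed $k$ there are infinitely many distinct $\lambda$ values, and there are infinitely many distinct $\mu_k$.

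Your route is more elementary and self-contained. It needs only the explicit eigenvalues and $q_{n,k}^{\alpha,\gamma,\kappa}\neq0$, and it bypasses the principal-part bookkeeping entirely. The paper's route yields more: $P(\mathcal D_1^{\alpha,\gamma,\kappa},\mathcal D_2^{\alpha,\kappa})$ has order exactly $2\deg P$, so the order filtration on $\mathfrak D_{\alpha,\gamma,\kappa}$ matches the total-degree filtration on $\mathbb R[X,Y]$. Your argument gives algebraic independence but not this grading information.
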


\begin{proof}
We first prove existence. Suppose that there exists a nonzero operator in
\(\mathfrak D_{\alpha,\gamma,\kappa}\) that cannot be written as a
polynomial in \(\mathcal D_1^{\alpha,\gamma,\kappa}\) and
\(\mathcal D_2^{\alpha,\kappa}\), and choose such an operator
\(\mathcal D\) of minimal order.

By Proposition~\ref{lem:highest_order_generated}, the order of
\(\mathcal D\) is \(2m\), and there exist constants
\(c_0,\ldots,c_m\) such that
\[
\widetilde{\mathcal D}
=
\mathcal D
-
\sum_{\ell=0}^{m}
c_\ell
\left(\mathcal D_1^{\alpha,\gamma,\kappa}\right)^{m-\ell}
\left(\mathcal D_2^{\alpha,\kappa}\right)^\ell
\]
has order lower than \(2m\). Since
\(\mathcal D_1^{\alpha,\gamma,\kappa}\) and
\(\mathcal D_2^{\alpha,\kappa}\) have the polynomials
\(q_{n,k}^{\alpha,\gamma,\kappa}\) as common eigenfunctions,
\(\widetilde{\mathcal D}\) also admits all these polynomials as
eigenfunctions. Moreover, \(\widetilde{\mathcal D}\) has polynomial
coefficients.

If \(\widetilde{\mathcal D}\neq0\), its order is lower than that of
\(\mathcal D\), so the minimality of \(\mathcal D\) implies that
\(\widetilde{\mathcal D}\) is a polynomial in
\(\mathcal D_1^{\alpha,\gamma,\kappa}\) and
\(\mathcal D_2^{\alpha,\kappa}\). The same conclusion is immediate if
\(\widetilde{\mathcal D}=0\). It follows in either case that
\(\mathcal D\) is a polynomial in these two operators, which contradicts
the choice of \(\mathcal D\).

We now prove uniqueness. It is enough to show that
\[
P\left(
\mathcal D_1^{\alpha,\gamma,\kappa},
\mathcal D_2^{\alpha,\kappa}
\right)
\neq0
\]
for every nonzero polynomial \(P(X,Y)\). Let \(r\) be the total degree of \(P\). If \(r=0\), then \(P\) is a
nonzero constant, and therefore
\[
P\left(
\mathcal D_1^{\alpha,\gamma,\kappa},
\mathcal D_2^{\alpha,\kappa}
\right)
\neq0.
\]
We may therefore assume that \(r\geq1\). Denote by
\[
P_r(X,Y)
=
\sum_{q=0}^{r}a_qX^{r-q}Y^q
\]
the homogeneous part of \(P\) of degree \(r\). 

Since \(P-P_r\) has degree at most \(r-1\) and both
\(\mathcal D_1^{\alpha,\gamma,\kappa}\) and
\(\mathcal D_2^{\alpha,\kappa}\) have order two, the differential
operator
\[
(P-P_r)\left(
\mathcal D_1^{\alpha,\gamma,\kappa},
\mathcal D_2^{\alpha,\kappa}
\right)
\]
has order at most \(2r-2\). Hence the terms of order \(2r\) in
\[
P\left(
\mathcal D_1^{\alpha,\gamma,\kappa},
\mathcal D_2^{\alpha,\kappa}
\right)
\]
coincide with the terms of order \(2r\) in
\[
P_r\left(
\mathcal D_1^{\alpha,\gamma,\kappa},
\mathcal D_2^{\alpha,\kappa}
\right).
\]

It remains to prove that the latter terms do not vanish. Let \(q_0\) be
the least index such that \(a_{q_0}\neq0\). For each
\(0\leq q\leq r\), equation \eqref{eq:Dq-highest-order-2m}, with
\(m=r\), gives the terms of order \(2r\) in
\[
\left(\mathcal D_1^{\alpha,\gamma,\kappa}\right)^{r-q}
\left(\mathcal D_2^{\alpha,\kappa}\right)^q
\]
as
\[
\sum_{j=0}^{r}
p_{j,q}(u)[u(1-u)]^{r-j}
\left(-4t(1-t)\partial_t^2\right)^j
\partial_u^{\,2r-2j}.
\]
Since \(p_{j,q}=0\) for \(j<q\), the first possibly nonzero term in this
sum corresponds to \(j=q\).

Consider the coefficient of
\[
\left(-4t(1-t)\partial_t^2\right)^{q_0}
\partial_u^{\,2r-2q_0}
\]
in the terms of order \(2r\) of
\[
P_r\left(
\mathcal D_1^{\alpha,\gamma,\kappa},
\mathcal D_2^{\alpha,\kappa}
\right).
\]
The summands with \(q<q_0\) do not contribute because \(a_q=0\), whereas
the summands with \(q>q_0\) do not contribute because
\(p_{q_0,q}=0\). Thus only the summand with \(q=q_0\) contributes, and
its coefficient is
\[
a_{q_0}p_{q_0,q_0}(u)[u(1-u)]^{r-q_0}
=
a_{q_0}[u(1-u)]^{r-q_0},
\]
since \(p_{q_0,q_0}=1\). This coefficient is not identically zero.
Therefore the part of order \(2r\) of
\[
P_r\left(
\mathcal D_1^{\alpha,\gamma,\kappa},
\mathcal D_2^{\alpha,\kappa}
\right)
\]
is nonzero.

The evaluation of \(P-P_r\) has order at most \(2r-2\), so it has no
terms of order \(2r\). Consequently, the same nonzero term occurs in
the part of order \(2r\) of
\[
P\left(
\mathcal D_1^{\alpha,\gamma,\kappa},
\mathcal D_2^{\alpha,\kappa}
\right).
\]
Thus this operator cannot vanish. Hence there is no nonzero polynomial
relation between \(\mathcal D_1^{\alpha,\gamma,\kappa}\) and
\(\mathcal D_2^{\alpha,\kappa}\), and the polynomial representation is
unique.
\end{proof}

\begin{corollary}%\label{cor:algebra_polynomial_ring_agk}
The algebra
$\mathfrak{D}_{\alpha,\gamma,\kappa}$ is isomorphic to the real polynomial ring in two variables.
\end{corollary}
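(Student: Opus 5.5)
The plan is to build the isomorphism explicitly and read off both properties from Theorem~\ref{thm:algebra_agk}. Let $\mathbb R[X,Y]$ be the real polynomial ring in two variables, and define the evaluation map
\[
\Phi:\mathbb R[X,Y]\longrightarrow \mathfrak D_{\alpha,\gamma,\kappa},\qquad
\Phi(P)=P\bigl(\mathcal D_1^{\alpha,\gamma,\kappa},\mathcal D_2^{\alpha,\kappa}\bigr).
\]

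\emph{$\Phi$ is well defined and lands in $\mathfrak D_{\alpha,\gamma,\kappa}$.} The two operators commute, since they share the eigenbasis $\{q_{n,k}^{\alpha,\gamma,\kappa}\}$. This basis spans all polynomials, so both operators act diagonally on it and $[\mathcal D_1^{\alpha,\gamma,\kappa},\mathcal D_2^{\alpha,\kappa}]$ vanishes on every polynomial. Because it is an operator with polynomial coefficients, it is then the zero operator; alternatively, one can invoke the commutativity of $\mathfrak D_{\alpha,\gamma,\kappa}$ from \cite[Corollary~7.2]{AMP25}. Hence substituting the two commuting operators into $P$ is unambiguous, and $\Phi$ is an $\mathbb R$-algebra homomorphism. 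Each $\Phi(P)$ has polynomial coefficients in $(u,v)$, since in Section~\ref{sec:uv} both generators were written with polynomial coefficients. It also has every $q_{n,k}^{\alpha,\gamma,\kappa}$ as an eigenfunction, with eigenvalue $P\bigl(\lambda_{n,k}^{\alpha,\gamma,\kappa},\mu_{n,k}^{\alpha,\kappa}\bigr)$. Thus $\Phi(P)\in\mathfrak D_{\alpha,\gamma,\kappa}$.

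\emph{Surjectivity and injectivity.} The existence part of Theorem~\ref{thm:algebra_agk} shows that every element of $\mathfrak D_{\alpha,\gamma,\kappa}$ equals $\Phi(P)$ for some $P$, so $\Phi$ is surjective. The uniqueness part shows that $\Phi(P)\neq0$ whenever $P\neq0$, so $\ker\Phi=0$. Therefore $\Phi$ is a bijective algebra homomorphism, i.e.\ an isomorphism $\mathbb R[X,Y]\cong\mathfrak D_{\alpha,\gamma,\kappa}$.

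The only point needing care is the commutativity of the two generators, which is what makes $\Phi$ a ring homomorphism rather than merely a linear bijection. It is settled by the eigenbasis argument above or by the cited corollary, so no real obstacle remains.
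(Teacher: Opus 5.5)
Your proof is correct and follows essentially the paper's argument. The paper gives no separate proof of the corollary; it reads it off from the existence and uniqueness parts of Theorem~\ref{thm:algebra_agk}, together with the commutativity of $\mathfrak D_{\alpha,\gamma,\kappa}$ taken from \cite[Corollary~7.2]{AMP25}, which makes $P\mapsto P(\mathcal D_1^{\alpha,\gamma,\kappa},\mathcal D_2^{\alpha,\kappa})$ a well-defined algebra isomorphism. Your direct eigenbasis argument that $[\mathcal D_1^{\alpha,\gamma,\kappa},\mathcal D_2^{\alpha,\kappa}]=0$ is a valid self-contained substitute for that citation.
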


\section{Concluding remarks}

We have developed the orthogonal and differential structure associated with
the symmetric Jacobi-type weight
\[
w_{\alpha,\gamma,\kappa}(x,y)
=
(xy)^\alpha(1-x-y)^\gamma |x-y|^{2\kappa+1}
\]
on a chamber of the simplex. The construction gives a monic symmetric
orthogonal basis, an explicit representation in terms of one-variable Jacobi
polynomials, and closed formulas for its squared norms. The basis is also a
common eigenbasis of two second-order differential operators:
\(\mathcal D_1^{\alpha,\gamma,\kappa}\) and the operator
\(\mathcal D_2^{\alpha,\kappa}\) obtained from the lowering and raising
operators.

The main algebraic result gives a complete description of the differential
operators associated with this eigenbasis. After passing to the elementary
symmetric variables \(u=x+y\) and \(v=xy\), every linear partial differential
operator with real polynomial coefficients having the transformed polynomials as
eigenfunctions is a polynomial in
\(\mathcal D_1^{\alpha,\gamma,\kappa}\) and
\(\mathcal D_2^{\alpha,\kappa}\). This representation is unique, so the two
operators are algebraically independent and the resulting differential-operator
algebra is isomorphic to the real polynomial ring in two variables. In particular,
the two second-order operators generate the complete algebra, and every nonzero
operator in it has even order.

\appendix

\section{Boundary estimates}\label{app:boundary-estimates}

The following lemmas collect the boundary estimates used in the Green
identities for
\(\mathcal D_1^{\alpha,\gamma,\kappa}\) and for the pair
\(D_-\), \(D_+^{\alpha,\kappa}\).

We assume that $\alpha,\gamma,\kappa>-1$ and $\alpha+\kappa>-\frac32$. For \(\varepsilon>0\), set
\[
\triangle_\varepsilon
=
\{(x,y)\in\triangle:
  y>\varepsilon,\ 
  z>\varepsilon,\ 
  x-y>\varepsilon\},
\qquad
z=1-x-y.
\]
Its boundary consists of the three sides
\[
\Gamma_\varepsilon^{(y)}
=
\{y=\varepsilon\},
\qquad
\Gamma_\varepsilon^{(z)}
=
\{z=\varepsilon\},
\qquad
\Gamma_\varepsilon^{(d)}
=
\{x-y=\varepsilon\},
\]
restricted by the remaining inequalities defining
\(\triangle_\varepsilon\).

For a symmetric polynomial \(f\), write
\[
A_f
=
x(1-x)\partial_xf-xy\,\partial_yf,
\qquad
B_f
=
y(1-y)\partial_yf-xy\,\partial_xf.
\]

\begin{lemma}\label{lem:D1-boundary-estimates}
Let \(f\) and \(g\) be symmetric polynomials. Then
\[
\int_{\partial\triangle_\varepsilon}
g\,w_{\alpha,\gamma,\kappa}
\left(A_f\,dy-B_f\,dx\right)
\longrightarrow0
\qquad
\text{as }\varepsilon\to0^+.
\]
\end{lemma}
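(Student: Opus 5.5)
The plan is to split the boundary integral over the three sides $\Gamma_\varepsilon^{(y)}$, $\Gamma_\varepsilon^{(z)}$, $\Gamma_\varepsilon^{(d)}$ and show that each contribution tends to zero. On each side we will identify the flux component normal to that side. We will check that it carries a vanishing factor (either the polynomial coefficient or, for the diagonal, the symmetry of $f$). Combined with the power behaviour of $w$, this factor gives a bound $O(\varepsilon^{\theta})$ with $\theta>0$. Throughout, $f,g$ are polynomials, so $g$, $\partial_xf$ and $\partial_yf$ are bounded on $\triangle$. The sides have length $O(1)$.

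On $\Gamma_\varepsilon^{(y)}$ we have $dy=0$, so only $-gwB_f\,dx$ survives. Here $B_f=y\bigl[(1-y)\partial_yf-x\partial_xf\bigr]=O(\varepsilon)$. The weight is $w=\varepsilon^\alpha x^\alpha z^\gamma(x-\varepsilon)^{2\kappa+1}$, so the integrand is bounded by $C\varepsilon^{\alpha+1}\,x^\alpha z^\gamma (x-\varepsilon)^{2\kappa+1}$ with $x\in(2\varepsilon,1-2\varepsilon)$ roughly. Now $x\ge 2\varepsilon$ gives $x-\varepsilon\ge x/2$. Hence $x^\alpha(x-\varepsilon)^{2\kappa+1}\lesssim x^{\alpha+2\kappa+1}$ when $2\kappa+1<0$, and $\lesssim x^{\alpha}$ when $2\kappa+1\ge0$. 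The $x$-integral near $x\sim\varepsilon$ is therefore bounded by $C(1+\varepsilon^{\alpha+2\kappa+2}+\varepsilon^{\alpha+1})$ up to logs. The total is $O(\varepsilon^{\alpha+1}+\varepsilon^{2\alpha+2\kappa+3})$ (with a possible $\log\varepsilon$), and both exponents are positive by $\alpha>-1$ and $\alpha+\kappa>-\tfrac32$. Near $z\approx 0$ the factor $z^\gamma$ is integrable since $\gamma>-1$.

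On $\Gamma_\varepsilon^{(z)}$, parametrize by $x$ with $y=1-\varepsilon-x$, so $dy=-dx$. The integrand becomes $-gw(A_f+B_f)\,dx$. Compute
\[
A_f+B_f=x(1-x-y)\partial_xf+y(1-x-y)\partial_yf=z\,(x\partial_xf+y\partial_yf)=O(\varepsilon).
\]
The weight contributes $\varepsilon^\gamma (xy)^\alpha (x-y)^{2\kappa+1}$, which is integrable along the segment uniformly in $\varepsilon$ by the same one-variable estimates near the endpoints $y\approx\varepsilon$ and $x-y\approx\varepsilon$. This gives $O(\varepsilon^{\gamma+1})\to0$.

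The diagonal side $\Gamma_\varepsilon^{(d)}$ is the step I expect to be the crux, because the coefficients there do not vanish. Parametrize by $y$ with $x=y+\varepsilon$, so $dx=dy$. The integrand is $gw(A_f-B_f)\,dy$, and
\[
A_f-B_f=x(1-x)\partial_xf-y(1-y)\partial_yf-xy(\partial_yf-\partial_xf).
\]
Because $f$ is symmetric, $\partial_yf(x,y)=\partial_xf(y,x)$. Hence $A_f-B_f$ is antisymmetric in $(x,y)$ and is divisible by $x-y$, giving $A_f-B_f=O(\varepsilon)$ uniformly. The weight is $\varepsilon^{2\kappa+1}(y(y+\varepsilon))^\alpha z^\gamma$. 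The integrand is thus bounded by $C\varepsilon^{2\kappa+2}y^{\alpha}(y+\varepsilon)^\alpha z^\gamma$ on $y\in(\varepsilon,\tfrac12)$. If $\alpha\ge0$ the integral is $O(1)$, giving $O(\varepsilon^{2\kappa+2})$. If $\alpha<0$ it is bounded by $\int_\varepsilon y^{2\alpha}\,dy\lesssim 1+\varepsilon^{2\alpha+1}$ (log if $2\alpha=-1$), giving $O(\varepsilon^{2\kappa+2}+\varepsilon^{2\alpha+2\kappa+3}|\log\varepsilon|)$. Both exponents are positive by $\kappa>-1$ and $\alpha+\kappa>-\tfrac32$. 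This is exactly where the condition $\alpha+\kappa>-\tfrac32$ is sharp, namely the vertex at the origin where the diagonal meets $y=0$, and I would take care to handle the corner of $\triangle_\varepsilon$ there consistently. Summing the three estimates gives the claim.
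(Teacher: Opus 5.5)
Your proposal is correct and follows essentially the same route as the paper. Both arguments use the three-side decomposition with the vanishing factors $B_f=O(y)$, $A_f+B_f=z\,(x\partial_xf+y\partial_yf)$, and $A_f-B_f$ divisible by $x-y$ by symmetry of $f$. Both then apply the same endpoint estimates (including the logarithmic borderline cases) based on $\alpha,\gamma,\kappa>-1$ and $\alpha+\kappa>-\tfrac32$. The one cosmetic difference is on $\Gamma_\varepsilon^{(z)}$: you bound $(xy)^\alpha(x-y)^{2\kappa+1}$ by local estimates at the two endpoints, whereas the paper uses the substitution $r=(2x-1+\varepsilon)/(1-\varepsilon)$ to compare with $\int_0^1(1-r^2)^\alpha r^{2\kappa+1}\,dr$.
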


\begin{proof}
We estimate separately the contributions from the three sides of
\(\partial\triangle_\varepsilon\).

On \(\Gamma_\varepsilon^{(y)}\), the boundary contribution is
determined by \(B_f\). Since \(f\), \(g\), and their first derivatives
are bounded on the closed triangle, there exists a constant \(C>0\),
independent of \(\varepsilon\), such that
\[
\left|I_\varepsilon^{(y)}\right|
\leq
C\varepsilon^{\alpha+1}
\int_{2\varepsilon}^{1-2\varepsilon}
x^\alpha
(1-x-\varepsilon)^\gamma
(x-\varepsilon)^{2\kappa+1}\,dx.
\]

Fix \(0<\delta<1/4\) and assume that
\(0<\varepsilon<\delta/2\). We decompose
\(I_\varepsilon^{(y)}
=I_{\varepsilon,0}^{(y)}+I_{\varepsilon,1}^{(y)}\), where the two terms
are the contributions from \([2\varepsilon,\delta]\) and
\([\delta,1-2\varepsilon]\), respectively. On \([2\varepsilon,\delta]\),
\[
\frac{x}{2}\leq x-\varepsilon\leq x,
\qquad
1-x-\varepsilon
\geq
1-\frac{3\delta}{2}>0,
\]
and hence
\[
\left|I_{\varepsilon,0}^{(y)}\right|
\leq
C\varepsilon^{\alpha+1}
\int_{2\varepsilon}^{\delta}
x^{\alpha+2\kappa+1}\,dx.
\]
Set
\[
\rho=\alpha+2\kappa+1.
\]
If \(\rho>-1\), the integral is uniformly bounded. If \(\rho=-1\),
it is bounded by a constant multiple of
\(\log(1/\varepsilon)\). If \(\rho<-1\), it is bounded by a constant
multiple of \(\varepsilon^{\rho+1}\), so
\[
\left|I_{\varepsilon,0}^{(y)}\right|
\leq
C\varepsilon^{2\alpha+2\kappa+3}
\longrightarrow0
\]
because \(\alpha+\kappa>-3/2\). Thus
\(I_{\varepsilon,0}^{(y)}\to0\) in every case.

On \([\delta,1-2\varepsilon]\), the factors \(x\) and
\(x-\varepsilon\) are uniformly separated from zero, while
\(\gamma>-1\) gives
\[
\left|I_{\varepsilon,1}^{(y)}\right|
\leq
C\varepsilon^{\alpha+1}
\longrightarrow0.
\]
Consequently,
\[
I_\varepsilon^{(y)}\longrightarrow0.
\]

On \(\Gamma_\varepsilon^{(z)}\), the boundary contribution is
determined by \(A_f+B_f\). Since
\[
A_f+B_f
=
z\bigl(x\partial_xf+y\partial_yf\bigr),
\]
we have
\[
\left|I_\varepsilon^{(z)}\right|
\leq
C\varepsilon^{\gamma+1}
\int_{1/2}^{1-2\varepsilon}
\bigl[x(1-\varepsilon-x)\bigr]^\alpha
(2x-1+\varepsilon)^{2\kappa+1}\,dx.
\]
Introduce
\[
r
=
\frac{2x-1+\varepsilon}{1-\varepsilon}.
\]
Then
\[
x=\frac{1-\varepsilon}{2}(1+r),
\qquad
1-\varepsilon-x=\frac{1-\varepsilon}{2}(1-r),
\qquad
dx=\frac{1-\varepsilon}{2}\,dr,
\]
and the integral becomes
\[
2^{-2\alpha-1}
(1-\varepsilon)^{2\alpha+2\kappa+2}
\int_{\frac{\varepsilon}{1-\varepsilon}}^{
      \frac{1-3\varepsilon}{1-\varepsilon}}
(1-r^2)^\alpha r^{2\kappa+1}\,dr.
\]
For \(0<\varepsilon<1/4\), this expression is uniformly bounded by
a constant multiple of
\[
\int_0^1
(1-r^2)^\alpha r^{2\kappa+1}\,dr,
\]
which is finite because \(\alpha,\kappa>-1\). Therefore
\[
\left|I_\varepsilon^{(z)}\right|
\leq
C\varepsilon^{\gamma+1}
\longrightarrow0.
\]

Finally, on \(\Gamma_\varepsilon^{(d)}\), symmetry of \(f\) implies
that there exists a polynomial \(C_f(x,y)\) such that
\[
B_f-A_f=(x-y)C_f(x,y).
\]
The boundary integrand therefore contains the factor
\(\varepsilon^{2\kappa+2}\). Its contribution near \((0,0)\) is
bounded by a constant multiple of
\[
\varepsilon^{2\kappa+2}
\int_{\varepsilon}^{\delta}y^{2\alpha}\,dy.
\]
If \(2\alpha>-1\), the integral is uniformly bounded. If
\(2\alpha=-1\), it is bounded by a constant multiple of
\(\log(1/\varepsilon)\). If \(2\alpha<-1\), it is bounded by a
constant multiple of \(\varepsilon^{2\alpha+1}\), so the
corresponding contribution is bounded by
\[
C\varepsilon^{2\alpha+2\kappa+3}
\longrightarrow0
\]
because \(\alpha+\kappa>-3/2\). Away from the vertex, the remaining
integral is uniformly bounded because \(\gamma>-1\). Hence
\[
I_\varepsilon^{(d)}\longrightarrow0.
\]

Combining the three estimates proves the result.
\end{proof}

The next boundary estimate is proved by the same decomposition into the three
sides of \(\partial\triangle_\varepsilon\). The corresponding bounds are
simpler, but they rely on the same endpoint estimates and on the standing
conditions on \(\alpha\), \(\gamma\), and \(\kappa\).

\begin{lemma}\label{lem:ladder-boundary-estimates}
Let \(f\) and \(g\) be symmetric polynomials. Then
\[
\int_{\partial\triangle_\varepsilon}
f(x,y)g(x,y)
(xy)^{\alpha+1}z^\gamma
(x-y)^{2\kappa+2}
\,(dx+dy)
\longrightarrow0
\qquad
\text{as }\varepsilon\to0^+.
\]
\end{lemma}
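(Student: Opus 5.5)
We estimate the boundary integral separately on the three sides $\Gamma_\varepsilon^{(y)}$, $\Gamma_\varepsilon^{(z)}$ and $\Gamma_\varepsilon^{(d)}$, as in the proof of Lemma~\ref{lem:D1-boundary-estimates}. Since $f$ and $g$ are polynomials, $|fg|\le C$ on the closed triangle. It therefore suffices to bound
\[
\int_{\Gamma}(xy)^{\alpha+1}z^\gamma(x-y)^{2\kappa+2}\,|dx+dy|
\]
on each side $\Gamma$ of $\partial\triangle_\varepsilon$. We first use the geometry of the form $dx+dy$. On $\Gamma_\varepsilon^{(z)}$ we have $x+y=1-\varepsilon$ constant, so $dx+dy=0$ and this side contributes nothing. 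This explains why no condition on $\gamma$ beyond integrability is needed.

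On $\Gamma_\varepsilon^{(y)}$ we have $y=\varepsilon$, so $dx+dy=dx$. The integrand is bounded by
\[
C\varepsilon^{\alpha+1}\int_{2\varepsilon}^{1-2\varepsilon}x^{\alpha+1}(1-x-\varepsilon)^\gamma(x-\varepsilon)^{2\kappa+2}\,dx .
\]
Near $x$ small, $x-\varepsilon\simeq x$, so the exponent $\alpha+2\kappa+3$ exceeds $-1$ (because $\alpha,\kappa>-1$). Near $x=1$, $\gamma>-1$ keeps the integral bounded. Splitting at a fixed $\delta$ exactly as before, the integral is uniformly bounded. The contribution is then $O(\varepsilon^{\alpha+1})\to0$ since $\alpha>-1$.

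On $\Gamma_\varepsilon^{(d)}$ we have $x-y=\varepsilon$, and the weight contributes the factor $\varepsilon^{2\kappa+2}$. We parametrize the side by $y$ (so $x=y+\varepsilon$ and $dx+dy=2\,dy$), with $y$ ranging over $[\varepsilon,(1-2\varepsilon)/2]$. The remaining integral is
\[
\int (y(y+\varepsilon))^{\alpha+1}(1-2y-\varepsilon)^\gamma\,dy \le C\int_0^{1/2}y^{2\alpha+2}(1-2y-\varepsilon)^\gamma\,dy .
\]
This is uniformly bounded because $2\alpha+2>-1$ and $\gamma>-1$; here one uses $y\le y+\varepsilon\le 2y$. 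The contribution is therefore $O(\varepsilon^{2\kappa+2})\to0$ since $\kappa>-1$.

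The step needing most care is the uniform boundedness of these one-dimensional integrals near the vertices $(\varepsilon,\varepsilon)$ and the corner near $(1,0)$, where the factors $(1-x-\varepsilon)^\gamma$ with $\gamma<0$ could blow up. For that I would reuse the $\delta$-splitting from Lemma~\ref{lem:D1-boundary-estimates}, bounding each piece by a fixed convergent Beta-type integral independent of $\varepsilon$. In contrast to Lemma~\ref{lem:D1-boundary-estimates}, the shifted exponents $\alpha+1$ and $2\kappa+2$ make all exponents comfortably integrable, so the condition $\alpha+\kappa>-3/2$ is not even needed here. Summing the three sides gives the claim.
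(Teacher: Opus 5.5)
Your proof is correct and follows essentially the same route as the paper: the same three-side decomposition, the observation that $dx+dy=0$ on $\Gamma_\varepsilon^{(z)}$, and the factors $\varepsilon^{\alpha+1}$ and $\varepsilon^{2\kappa+2}$ multiplying one-dimensional integrals that stay uniformly bounded using only $\alpha,\gamma,\kappa>-1$. The only flaw is cosmetic: on $\Gamma_\varepsilon^{(d)}$ keep the upper limit $(1-2\varepsilon)/2$, or substitute $s=1-2y-\varepsilon$, instead of extending the integral to $1/2$, because $(1-2y-\varepsilon)^\gamma$ is undefined for $y>(1-\varepsilon)/2$; the paper's split at $y=1/4$ handles exactly this point.
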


\begin{proof}
On \(\Gamma_\varepsilon^{(y)}\), we have \(dy=0\), and hence the
absolute value of the corresponding contribution is bounded by
\[
C\varepsilon^{\alpha+1}
\int_{2\varepsilon}^{1-2\varepsilon}
x^{\alpha+1}
(1-x-\varepsilon)^\gamma
(x-\varepsilon)^{2\kappa+2}\,dx.
\]
The integral is uniformly bounded. Indeed, near the origin its
integrand is bounded by a constant multiple of
\(
x^{\alpha+2\kappa+3}
\)
whose exponent is greater than \(-1\), while the other endpoint is
controlled by \(\gamma>-1\). Since \(\alpha>-1\), this contribution
tends to zero.

On \(\Gamma_\varepsilon^{(z)}\), we have \(dy=-dx\). Therefore
\[
dx+dy=0,
\]
and the corresponding boundary contribution vanishes identically.

On \(\Gamma_\varepsilon^{(d)}\), write
\(
x=y+\varepsilon
\). The absolute value of the boundary contribution is bounded by
\[
C\varepsilon^{2\kappa+2}
\int_{\varepsilon}^{(1-2\varepsilon)/2}
[y(y+\varepsilon)]^{\alpha+1}
(1-2y-\varepsilon)^\gamma\,dy.
\]
To obtain a bound uniform in \(\varepsilon\), split the integral at
\(y=1/4\). On \([\varepsilon,1/4]\), the factor
\(1-2y-\varepsilon\) is uniformly separated from zero, and
\(y+\varepsilon\leq2y\). Hence this part is bounded by a constant
multiple of
\[
\int_0^{1/4} y^{2\alpha+2}\,dy.
\]
On \([1/4,(1-2\varepsilon)/2]\), the factor
\([y(y+\varepsilon)]^{\alpha+1}\) is uniformly bounded. Using
\(s=1-2y-\varepsilon\), this part is bounded by a constant multiple of
\[
\int_0^{1/2}s^\gamma\,ds.
\]
Both integrals are finite because \(\alpha,\gamma>-1\). Thus the
original integral is uniformly bounded. Since
\(\kappa>-1\),
\[
\varepsilon^{2\kappa+2}\longrightarrow0.
\]
Thus the contribution from
\(\Gamma_\varepsilon^{(d)}\) also tends to zero.
\end{proof}

\end{document}